\title{\bf Nondeterministic graph property testing}
\author{L\'aszl\'o Lov\'asz\\
Katalin Vesztergombi\\
Institute of Mathematics, E\"otv\"os Lor\'and University\\
Budapest, Hungary}
\long\def\ignore#1{}
\newtheorem{theorem}{Theorem}[section]
\newtheorem{prop}[theorem]{Proposition}
\newtheorem{lemma}[theorem]{Lemma}
\newtheorem{corollary}[theorem]{Corollary}
\newenvironment{proof}{\medskip\noindent{\bf Proof. }}{\hfill$\square$\medskip}
\newenvironment{proof*}[1]{\medskip\noindent{\bf Proof of #1.}}{\hfill$\square$\medskip}
\long\def\killtext#1{}
\begin{document}

\def\Pr{{\sf P}}
\def\E{{\sf E}}
\def\Var{{\sf Var}}
\def\eps{\varepsilon}
\def\wt{\widetilde}
\def\wh{\widehat}

\def\AA{\mathcal{A}}\def\BB{\mathcal{B}}\def\CC{\mathcal{C}}
\def\DD{\mathcal{D}}\def\EE{\mathcal{E}}\def\FF{\mathcal{F}}
\def\GG{\mathcal{G}}\def\HH{\mathcal{H}}\def\II{\mathcal{I}}
\def\JJ{\mathcal{J}}\def\KK{\mathcal{K}}\def\LL{\mathcal{L}}
\def\MM{\mathcal{M}}\def\NN{\mathcal{N}}\def\OO{\mathcal{O}}
\def\PP{\mathcal{P}}\def\QQ{\mathcal{Q}}\def\RR{\mathcal{R}}
\def\SS{\mathcal{S}}\def\TT{\mathcal{T}}\def\UU{\mathcal{U}}
\def\VV{\mathcal{V}}\def\WW{\mathcal{W}}\def\XX{\mathcal{X}}
\def\YY{\mathcal{Y}}\def\ZZ{\mathcal{Z}}

\def\Ab{\mathbf{A}}\def\Bb{\mathbf{B}}\def\Cb{\mathbf{C}}
\def\Db{\mathbf{D}}\def\Eb{\mathbf{E}}\def\Fb{\mathbf{F}}
\def\Gb{\mathbf{G}}\def\Hb{\mathbf{H}}\def\Ib{\mathbf{I}}
\def\Jb{\mathbf{J}}\def\Kb{\mathbf{K}}\def\Lb{\mathbf{L}}
\def\Mb{\mathbf{M}}\def\Nb{\mathbf{N}}\def\Ob{\mathbf{O}}
\def\Pb{\mathbf{P}}\def\Qb{\mathbf{Q}}\def\Rb{\mathbf{R}}
\def\Sb{\mathbf{S}}\def\Tb{\mathbf{T}}\def\Ub{\mathbf{U}}
\def\Vb{\mathbf{V}}\def\Wb{\mathbf{W}}\def\Xb{\mathbf{X}}
\def\Yb{\mathbf{Y}}\def\Zb{\mathbf{Z}}

\def\ab{\mathbf{a}}\def\bb{\mathbf{b}}\def\cb{\mathbf{c}}
\def\db{\mathbf{d}}\def\eb{\mathbf{e}}\def\fb{\mathbf{f}}
\def\gb{\mathbf{g}}\def\hb{\mathbf{h}}\def\ib{\mathbf{i}}
\def\jb{\mathbf{j}}\def\kb{\mathbf{k}}\def\lb{\mathbf{l}}
\def\mb{\mathbf{m}}\def\nb{\mathbf{n}}\def\ob{\mathbf{o}}
\def\pb{\mathbf{p}}\def\qb{\mathbf{q}}\def\rb{\mathbf{r}}
\def\sb{\mathbf{s}}\def\tb{\mathbf{t}}\def\ub{\mathbf{u}}
\def\vb{\mathbf{v}}\def\wb{\mathbf{w}}\def\xb{\mathbf{x}}
\def\yb{\mathbf{y}}\def\zb{\mathbf{z}}

\def\Abb{\mathbb{A}}\def\Bbb{\mathbb{B}}\def\Cbb{\mathbb{C}}
\def\Dbb{\mathbb{D}}\def\Ebb{\mathbb{E}}\def\Fbb{\mathbb{F}}
\def\Gbb{\mathbb{G}}\def\Hbb{\mathbb{H}}\def\Ibb{\mathbb{I}}
\def\Jbb{\mathbb{J}}\def\Kbb{\mathbb{K}}\def\Lbb{\mathbb{L}}
\def\Mbb{\mathbb{M}}\def\Nbb{\mathbb{N}}\def\Obb{\mathbb{O}}
\def\Pbb{\mathbb{P}}\def\Qbb{\mathbb{Q}}\def\Rbb{\mathbb{R}}
\def\Sbb{\mathbb{S}}\def\Tbb{\mathbb{T}}\def\Ubb{\mathbb{U}}
\def\Vbb{\mathbb{V}}\def\Wbb{\mathbb{W}}\def\Xbb{\mathbb{X}}
\def\Ybb{\mathbb{Y}}\def\Zbb{\mathbb{Z}}

\def\R{{\mathbb R}}
\def\Q{{\mathbb Q}}
\def\Z{{\mathbb Z}}
\def\N{{\mathbb N}}
\def\C{{\mathbb C}}
\def\U{{\mathbb U}}
\def\Ge{{\mathbb G}}
\def\Ha{{\mathbb H}}

\def\lunl{[\hskip-1pt[}
\def\runl{]\hskip-1pt]}
\def\one{{\mathbbm1}}

\def\sbd#1{{#1}^\text{\rm sub}}

\maketitle


\begin{abstract}
A property of finite graphs is called nondeterministically testable
if it has a ``certificate'' such that once the certificate is
specified, its correctness can be verified by random local testing.
In this paper we study certificates that consist of one or more unary
and/or binary relations on the nodes, in the case of dense graphs.
Using the theory of graph limits, we prove that nondeterministically
testable properties are also deterministically testable.
\end{abstract}

\section{Introduction}

Let $\PP$ be a property of finite simple graphs (i.e., a class of
finite simple graphs closed under isomorphism). We say that $\PP$ is
{\it testable}, if there exists another property $\TT$ (called a {\it
test property}) satisfying the following conditions:

\begin{itemize}
\item if a graph $G$ has property $\PP$, then for all $1\le r\le
    |V(G)|$, a random induced subgraph on $r$ nodes (chosen
    uniformly among all such induced subgraphs) has property
    $\TT$ with probability at least $2/3$, and

\item for every $\eps>0$ there is an $r_\eps\ge 1$ such that if
    $G$ is a graph whose edit distance from $\PP$ is at least
    $\eps |V(G)|^2$, then for all $r_\eps\le r\le |V(G)|$, a
    random induced subgraph on $r$ nodes has property $\TT$ with
    probability at most $1/3$.
\end{itemize}

This notion of testability is often called {\it oblivious testing},
which refers to the fact that no information about the size of $G$ is
assumed. It is easy to see that if $\PP$ is a testable property such
that arbitrarily large graphs can have the property, then there must
exist a graph with property $\PP$ with any sufficiently large number
of nodes. The definition extends trivially to graphs whose edges are
oriented, and whose nodes and/or edges are colored with a fixed
finite number $k$ of colors.

Testability of graph properties was introduced by Rubinfeld and Sudan
\cite{RS} and Goldreich, Goldwasser and Ron \cite{GGR}. There are
many graph properties that are known to be testable \cite{GGR,AFKS};
(see e.g.~\cite{Fisch} for a survey and \cite{Gold} for a collection
of more recent surveys). One surprisingly general sufficient
condition was found by Alon and Shapira \cite{ASh}: {\it Every
hereditary graph property is testable.} (A graph property is called
{\it hereditary}, if it is inherited by induced subgraphs.)

Various characterizations of testable properties are known
\cite{AFNS,LSz4}, but they are not simple to state. The goal of this
paper is to prove a characterization, which is useful as a sufficient
condition in a number of cases.

Let $L$ be a directed graph whose nodes and edges are colored. Let us
make some simplifying assumptions that don't change the results. We
assume that every pair of nodes is connected by two, oppositely
directed edges. If some edges are missing, we can add them colored
with an additional color. If some edges have larger (but bounded)
multiplicity, we can use colors to indicate this. Furthermore, we may
get rid of the node colors by coloring every edge $e$ with the triple
$(a,b,c)$, where $a$ is the original color of the edge, $b$ is the
color of its head, and $c$ is the color of its tail. Edge-colorings
obtained this way have a special property (each edge incident with a
node $v$ should carry the same information about the color of $V$),
but this consistency is a testable property. We call a complete
digraph whose edges are colored with $1,\dots,k$ briefly a {\it
$k$-colored digraph}.

Given a $k$-colored digraph $L$ and a positive integer $m \le k$, we
can get an ordinary graph from $L$ by keeping only the edges with
colors $1,\dots,m $, and then forgetting the coloring and the
orientation. We call this graph $L'$ the ``shadow'' of $L$. If $\QQ$
is a property of colored directed graphs, then we define
$\QQ'=\{L':~L\in\QQ\}$.

A graph property $\PP$ is {\it nondeterministically testable}, if
there exist two integers $k\ge m \ge1$ and a property $\QQ$ of
$k$-colored digraphs such that $\QQ$ is testable and $\QQ'=\PP$. In
other words, $G$ has property $\PP$ if and only if we can orient its
edges (in one or both directions), color them with $m$ colors, add
all the missing oriented edges, and color them with further $k-m$
colors, so that the resulting $k$-colored digraph has property $\QQ$.
We call such an orientation and coloring a {\it certificate} for
$\PP$.

Instead of a $k$-coloring, we could specify $k$ binary relations on
$V(G)$ as a certificate (this would be more in the spirit of
mathematical logic). The fact that in a coloring they are disjoint
and partition $V(G)$ and $\binom{V(G)}{2}$, respectively, can be
easily tested. Conversely, such a system of relations defines a
$2^k$-coloring. As long as we are not concerned with efficiency,
these two ways of looking at certificates are equivalent.

Clearly every testable property is nondeterministically testable
(choosing $k=2$, $m=1$). Our main result asserts the converse.

\begin{theorem}\label{THM:MAIN}
A graph property is nondeterministically testable if and only if it
is testable.
\end{theorem}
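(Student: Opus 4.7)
The plan is to use the theory of graph limits. Write $\WW_k$ for the space of $k$-colored digraphons and $\WW_0$ for the space of graphons; both are compact in the cut distance. I invoke the Lov\'asz--Szegedy characterization from \cite{LSz4}: a graph property $\RR$ is testable iff the limit set $\hat\RR$ (graphons arising as cut-distance limits of sequences in $\RR$ with $|V|\to\infty$) is closed in cut distance and is sampling-stable (a $W$-random graph of size $n$ is, with high probability, within $\eps n^2$ edit distance of $\RR$, uniformly in $W\in\hat\RR$, once $n$ is large). The plan is to transfer both conditions from $\QQ$ to $\PP$ through the shadow map.

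First I extend the shadow operation $L\mapsto L'$ to a continuous map $\Phi:\WW_k\to\WW_0$. For a $k$-colored digraphon $W=(W_1,\ldots,W_k)$ with $\sum_i W_i(x,y)\equiv 1$, set
\[
\Phi(W)(x,y) = 1 - \Bigl(1-\sum_{i\le m}W_i(x,y)\Bigr)\Bigl(1-\sum_{i\le m}W_i(y,x)\Bigr),
\]
which encodes the probability that at least one of the two directed edges between $x$ and $y$ bears a color in $\{1,\ldots,m\}$. Since subgraph densities in $\Phi(W)$ are polynomial combinations of colored subgraph densities in $W$, and cut-distance convergence is equivalent to convergence of all subgraph densities, $\Phi$ is cut-continuous. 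Moreover $\Phi(H)=H'$ for every finite $H\in\QQ$ under the canonical graph-to-graphon identification, and $\Phi$ intertwines sampling: a $\Phi(H)$-random graph of size $n$ has the same distribution as the shadow of an $H$-random $k$-colored digraph of size $n$.

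Next I prove the identity $\hat\PP = \Phi(\hat\QQ)$. The inclusion $\Phi(\hat\QQ)\subseteq\hat\PP$ follows from $\Phi(\QQ)=\PP$ together with continuity of $\Phi$. For the reverse inclusion, given $W\in\hat\PP$ pick $G_n\in\PP$ with $G_n\to W$ in cut distance and, for each $n$, a certificate $H_n\in\QQ$ with $H_n'=G_n$. Compactness of $\WW_k$ yields a cut-convergent subsequence $H_{n_j}\to H\in\hat\QQ$; continuity of $\Phi$ then gives $\Phi(H) = \lim\Phi(H_{n_j}) = \lim G_{n_j} = W$.

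Finally, testability of $\QQ$ makes $\hat\QQ$ closed in the compact space $\WW_k$, so $\hat\QQ$ is compact and its continuous image $\hat\PP = \Phi(\hat\QQ)$ is closed in $\WW_0$. Sampling-stability transfers through $\Phi$ in the same spirit: sampling from $W=\Phi(H)$ factors through sampling from $H$, and the shadow map is edit-distance-non-increasing, so an $\eps$-edit-distance approximation of the $H$-sample by an element of $\QQ$ projects under shadow to an $\eps$-edit-distance approximation of the $W$-sample by an element of $\PP$. Both conditions of the Lov\'asz--Szegedy characterization then hold for $\PP$, yielding testability. The main obstacle, as I see it, is technical rather than conceptual: one must nail down the precise definitions of $k$-colored digraphons and of the shadow map $\Phi$, and verify that they interact with cut convergence and with sampling in exactly the way the testability characterization requires.
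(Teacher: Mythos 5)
Your plan is a genuinely different route from the paper's: you lift everything to the space of limit objects and try to transfer a characterization of testability across a shadow map $\Phi$, whereas the paper works directly with the finite graphs $F_n$ and \emph{constructs} a coloring of $F_n$ itself. There is, however, a real gap at the heart of your argument.

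The characterization of testability actually used by the paper (Proposition~\ref{PROP:TEST-CHAR}) is: $\PP$ is testable iff for \emph{every} sequence $(F_n)$ with $|V(F_n)|\to\infty$, $\delta_\square(F_n,\PP)\to0$ implies $d_1(F_n,\PP)\to0$. Your ``sampling-stability'' condition concerns only $W$-random graphs for $W\in\hat\PP$, but the $F_n$ in this characterization are arbitrary. Knowing that $F_n\to U$ with $U\in\hat\PP$ makes $F_n$ close to $\Gbb(n,U)$ in \emph{cut} distance, not in edit distance, so the fact that $\Gbb(n,U)$ lands near $\PP$ in edit distance says nothing directly about $d_1(F_n,\PP)$. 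Bridging this gap is precisely the content of the paper's Lemma~\ref{LEM:PULLBACK}: given $F_n\to U$, it builds $k$-colored digraphs $J_n$ \emph{on $V(F_n)$} with $J_n'=F_n$ and $J_n\to\Wb$, by first assigning to the edges of $F_n$ fractional colors proportional to $W^h_{\SS_n}/U_{\SS_n}$ (and similarly on non-edges with $1-U_{\SS_n}$ in the denominator) and then randomly rounding; one then invokes testability of $\QQ$ on the $J_n$ and takes shadows. Your proposal has no analogue of this pullback, and the phrase ``sampling-stability transfers in the same spirit'' elides exactly the step where the paper's real work happens. In addition, the characterization you cite from \cite{LSz4} is not the one stated there, and you would need to verify its equivalence to Proposition~\ref{PROP:TEST-CHAR} before relying on it.

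A secondary issue is your continuity claim for $\Phi$. The expansion of $\Phi(W)(x,y)$ contains the term $a(x,y)\,a(y,x)$ with $a=\sum_{i\le m}W_i$, which couples the two orientations of the same pair. Convergence in the cut norm of each coordinate of a $k$-dikernel does not control such quadratic quantities (compare: $\|a_n-\tfrac12\|_\square\to0$ does not force $\int a_n(x,y)a_n(y,x)\to\tfrac14$), so ``subgraph densities are polynomial combinations of colored subgraph densities'' does not by itself yield cut-continuity of $\Phi$. The paper sidesteps this entirely by a preliminary refinement of the coloring (in the proof of Theorem~\ref{THM:MAIN}): it arranges that for every edge of the shadow \emph{both} orientations carry a low color. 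This makes $U=\sum_{h\le m}W^h$ symmetric and equal to the shadow, i.e.\ it turns the shadow into the linear projection $\Wb\mapsto\sum_{h\le m}W^h$, which is trivially cut-continuous and is the hypothesis under which Lemma~\ref{LEM:PULLBACK} is stated. You should either adopt this refinement or justify continuity of your nonlinear $\Phi$ along the specific sequences that actually occur.
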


One could say that this theorem shows that ``P=NP'' for property
testing in dense graphs. The proof uses the theory of graph limits as
developed in \cite{BCLSV1,LSz1}, and its connection with property
testing \cite{LSz4}.

\ignore{ To give a rough sketch of the proof technique, let $\PP$ be
a nondeterministically testable property that is the shadow of a
testable property $\QQ$, but itself is not testable. Then there are
two sequences of graphs $(\wh{G}_1,\wh{G}_2,\dots)$ and
$(H_1,H_2,\dots)$ such that the sampling distance of $G_n$ and $H_n$
tends to $0$ as $n\ti\infty$, and every $G_n$ has property $\PP$, but
every $H_n$ is $\eps$-far from the property in edit distance (for
some $\eps>0$). Each $G_n$ is the shadow of a colored digraph
$L_n\in\QQ$. Using the theory of graph limits, we may assume that
$G_n$ and $H_n$ converge to the same limit object $W$ (a graphon),
and $L_n$ converges to a limit object $U$ (a colored digraphon) whose
shadow if the graphon $W$. The main step in the proof is to pull back
the orientation and coloring of $U$ onto the graphs $H_n$, which will
represent them as shadows of}

\section{Preliminaries}

\subsection{Convergence and limits}

Convergence of a sequence of dense finite graphs was defined by
Borgs, Chayes, Lov\'asz, S\'os and Vesztergombi \cite{BCLSV0,BCLSV1}.
Graphons were introduced by Lov\'asz an Szegedy in \cite{LSz1} as
limits of convergent sequences of finite graphs. We have to extend
these notions to colored digraphs; this was essentially done in
\cite{LSz9}, but we use here a little different (simpler)
terminology.

For a graph $G$ and integer $r\le |V(G)|$, let us select an ordered
$r$-tuple of nodes of $G$ randomly and uniformly (without
repetition). Let $\Gbb(r,G)$ denote the subgraph induced by these
nodes. If $G$ is colored and directed, then $\Gbb(r,G)$ is also a
colored digraph in the obvious way.

We say that a sequence of $k$-colored digraphs $L_n$ is {\it
convergent}, if $|V(G_n)|\to\infty$, and for every $r\ge1$, the
distribution of $\Gbb(r,L_n)$ tends to a limit as $n\to\infty$. Note
that this distribution is over a finite set, so it does not matter in
which norm we want it to converge.

The limit object of a convergent sequence of simple graphs can be
described as a symmetric measurable function $W:~[0,1]^2\to[0,1]$,
called a {\it graphon}. We will need the more general notion of a
{\it kernel}, a bounded symmetric measurable function
$W:~[0,1]^2\to\R$. Dropping the condition of symmetry, we get {\it
digraphons} and {\it dikernels}.

For a sequence of $k$-colored digraphs, the limit object is a bit
more complicated: it consists of $k$ digraphons $(W^1,\dots,W^k)$
such that $\sum_h W^h=1$. We call the $k$-tuple
$\mathbf{W}=(W^1,\dots,W^k)$ a {\it $k$-digraphon}. We define a {\it
$k$-dikernel} analogously.

Let $L$ be a $k$-colored digraph with $V(L)=[n]$. Let ${\SS_n}$
denote the partition of $[0,1]$ into $n$ intervals $S_1,\dots,S_n$ of
equal length. We can associate with $L$ a $k$-digraphon
$\mathbf{W}_L= (W_L^1,\dots,W_L^k)$, where
\[
\mathbf{W}_L^h(x,y)=
  \begin{cases}
    1, & \text{if $x\in S_i$, $y\in S_j$, and the color of $ij$ is $h$}, \\
    0, & \text{otherwise}.
  \end{cases}
\]
More generally, we can consider a {\it fractionally $k$-colored
digraph} $H$ in which we have $k$ nonnegative weights
$\beta^1(i,j),\dots, \beta^k(i,j)$ for every ordered pair $(i,j)$ of
nodes, with $\sum_h \beta^h(i,j)=1$. We consider every $k$-colored
digraph as a special case, where $\beta^h$ is the indicator function
of edge color $h$. For a fractionally $k$-colored digraph $H$, we
define the $k$-digraphon $\Wb_H$ in the obvious way.

We can sample a $k$-colored digraph $\Gbb(r,\Wb)$ on node set $[r]$
from a $k$-digraphon $\mathbf{W}$ as follows: we choose $r$
independent random points $X_1,\dots,X_r\in[0,1]$ uniformly, and we
color a pair $(i,j)$ with color $h$ with probability $W^h(X_i,X_j)$
(independently for different pairs of nodes).

The following fact is proved in \cite{LSz9}.

\begin{prop}\label{PROP:CONV-COLORED}
Let $L_n$ be a convergent sequence of $k$-colored digraphs. Then
there is a $k$-digraphon $\mathbf{W}$ such that
$\Gbb(r,L_n)\to\Gbb(r,\mathbf{W})$ in distribution.
\end{prop}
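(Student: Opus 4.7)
The plan is to reduce to the scalar graphon limit theorem by two standard moves: first, relate sampling from $L_n$ to sampling from its canonical $k$-digraphon $\Wb_{L_n}$; second, extract a limit $\Wb$ by a compactness argument, generalized coordinatewise from graphons to $k$-digraphons.

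For the first step, I would observe that drawing $\Gbb(r,\Wb_{L_n})$ amounts to choosing $r$ i.i.d.\ uniform points $X_1,\ldots,X_r\in[0,1]$ and reading off the colors of the induced pairs. Since $\Wb_{L_n}$ is constant on the blocks $S_i\times S_j$ corresponding to pairs of vertices of $L_n$, this coincides with sampling $r$ vertices of $L_n$ uniformly \emph{with} replacement and returning the induced $k$-colored subdigraph. The only discrepancy with $\Gbb(r,L_n)$ (which samples without replacement) comes from collisions, an event of probability $O(r^2/|V(L_n)|)\to 0$. Hence $\Gbb(r,L_n)$ and $\Gbb(r,\Wb_{L_n})$ have the same limits in distribution for every fixed $r$.

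For the second step, I would extend the Lov\'asz--Szegedy compactness theorem from graphons to $k$-digraphons. One route is to apply the weak regularity lemma coordinatewise: given any $\eps>0$, a common refinement of weak regularity partitions for $W^1_{L_n},\ldots,W^k_{L_n}$ yields a partition with small cut error in every coordinate simultaneously, and the constraint $\sum_h W^h_{L_n}=1$ is preserved by blockwise averaging. A standard diagonal argument then extracts a subsequence $(L_{n_j})$ such that $\Wb_{L_{n_j}}\to\Wb$ in cut distance in each coordinate, for some $k$-digraphon $\Wb$ with $\sum_h W^h=1$. Because each sample density is a multilinear expression in $(W^1,\dots,W^k)$, coordinatewise cut convergence implies $\Gbb(r,\Wb_{L_{n_j}})\to\Gbb(r,\Wb)$ in distribution. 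Alternatively, one can invoke the Aldous--Hoover representation of $\{1,\dots,k\}$-valued exchangeable two-dimensional arrays directly on the assumed limit distributions; this produces $\Wb$ without passing through regularity.

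Combining the two steps along the subsequence gives $\Gbb(r,L_{n_j})\to\Gbb(r,\Wb)$, and since by hypothesis the full sequence $\Gbb(r,L_n)$ converges in distribution, its limit must equal that of any such subsequence, yielding $\Gbb(r,L_n)\to\Gbb(r,\Wb)$ for all $r$. The main technical obstacle is the coordinatewise compactness statement for $k$-digraphons; once this is in hand, the sum-to-one constraint plays no active role, and the rest is a routine lift of the scalar graphon theory.
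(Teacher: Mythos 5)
The paper does not actually prove Proposition~\ref{PROP:CONV-COLORED}; it simply cites \cite{LSz9} for it, so there is no in-paper argument to compare against. Taken on its own merits, your sketch is a correct and standard route to the result. The reduction from sampling without replacement from $L_n$ to sampling from the block $k$-digraphon $\Wb_{L_n}$ via the $O(r^2/|V(L_n)|)$ collision bound is exactly right, and the compactness-plus-counting-lemma strategy (common weak-regularity partitions across coordinates, a diagonal/martingale extraction of a $\delta_\square$-limit $\Wb$, then multilinearity of sample probabilities to transfer cut convergence to convergence in distribution) is the $k$-digraphon version of the Lov\'asz--Szegedy argument, and it goes through. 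Two small points worth flagging if you were to write this out in full: (i) the coordinatewise extraction must produce a \emph{single} pair of measure-preserving maps working simultaneously for all $k$ components, which is what the common-refinement and common-relabeling steps are for, and which the definition of $\delta_\square$ in \eqref{EQ:DELTA-CUT} requires; and (ii) ``diagonal argument'' understates the compactness step, which in the graphon setting genuinely relies on the martingale convergence theorem applied to successive block refinements, not just on Bolzano--Weierstrass in each coordinate. The Aldous--Hoover alternative you mention is also a legitimate (and arguably cleaner) route, as $\{1,\dots,k\}$-valued exchangeable arrays fall squarely within the scope of that representation theorem. Your closing observation that the sum-to-one constraint ``plays no active role'' is fine for the convergence argument, but it is needed to certify that the limit $\Wb$ is a bona fide $k$-digraphon rather than merely a $k$-tuple of dikernels; since nonnegativity and the constraint $\sum_h W^h=1$ are both preserved under block averaging and a.e.\ limits, this is only a bookkeeping remark, not a gap.
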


\noindent We write $L_n\to \mathbf{W}$ if this holds.

\medskip

Let $W$ be a dikernel, and let $\JJ=\{S_1,\dots,S_m\}$ be a partition
of $[0,1]$ into measurable sets with positive measure. We denote by
$W_\JJ$ the dikernel obtained by averaging $W$ in every rectangle
$S_i\times S_j$. More precisely, for $x\in S_i$ and $y\in S_j$ we
define
\[
W_\JJ(x,y)= \frac{1}{\lambda(S_i)\lambda(S_j)}\int\limits_{S_i\times
S_j} W(u,z)\,du\,dz.
\]

We quote a well-known fact:

\begin{prop}\label{PROP:AS-APPROX}
For every dikernel $W$, we have $W_{\SS_n}\to W$ $(n\to\infty)$
almost everywhere.
\end{prop}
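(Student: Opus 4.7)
My plan is to recognize $W_{\SS_n}(x,y)$ as the average of $W$ over the unique cell of the product partition $\SS_n\times\SS_n$ containing $(x,y)$, and then invoke the Lebesgue differentiation theorem on $\R^2$. Indeed, writing $Q_n(x,y):=S_i\times S_j$ for the cell containing $(x,y)$, the definition of $W_\JJ$ gives
\[
W_{\SS_n}(x,y)=\frac{1}{\lambda(Q_n(x,y))}\int_{Q_n(x,y)} W(u,z)\,du\,dz.
\]
Since $Q_n(x,y)$ is a square of side $1/n$ containing $(x,y)$, it is contained in the Euclidean ball of radius $\sqrt{2}/n$ around $(x,y)$, whose measure exceeds $\lambda(Q_n(x,y))$ by only the fixed factor $2\pi$. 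Thus the cells form a sequence of sets shrinking nicely to $(x,y)$ in the sense required for Lebesgue differentiation: bounded eccentricity, diameter tending to $0$.

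Because $W$ is bounded, it is integrable on $[0,1]^2$, and the Lebesgue differentiation theorem (in its version for families of bounded eccentricity) asserts that at every Lebesgue point $(x,y)$ of $W$, the averages
\[
\frac{1}{\lambda(Q_n)}\int_{Q_n} W\,d\lambda
\]
converge to $W(x,y)$ for any such nicely shrinking sequence $Q_n\ni(x,y)$. Applying this with $Q_n=Q_n(x,y)$ and using the fact that almost every point of $[0,1]^2$ is a Lebesgue point of $W$ yields $W_{\SS_n}(x,y)\to W(x,y)$ for almost every $(x,y)$.

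The one point requiring a bit of care is that the partitions $\SS_n$ are not nested (for example, $\SS_2$ does not refine $\SS_3$), so one cannot simply quote the martingale convergence theorem applied to the conditional expectations $\E(W\mid \sigma(\SS_n\times\SS_n))$; that approach works only along a subsequence such as $n=2^k$. This is precisely why the Lebesgue differentiation theorem, whose conclusion holds along arbitrary nicely shrinking sequences, is the right tool. If one preferred to avoid that theorem, an equivalent route would be to verify the statement directly for continuous $W$ via uniform continuity on $[0,1]^2$ (giving even uniform convergence), approximate a general bounded $W$ in $L^1$ by continuous functions, and control the resulting error pointwise by the Hardy--Littlewood maximal function.
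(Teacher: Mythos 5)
The paper quotes this proposition as a well-known fact and supplies no proof, so there is no argument in the source to compare against. Your proof via the Lebesgue differentiation theorem is correct and complete: the cell $Q_n(x,y)\in\SS_n\times\SS_n$ containing $(x,y)$ is an axis-parallel square of side $1/n$, hence contained in the ball $B((x,y),\sqrt 2/n)$, and the measure ratio $\lambda(Q_n(x,y))/\lambda(B((x,y),\sqrt 2/n))=1/(2\pi)$ is bounded below uniformly in $n$ and $(x,y)$, so the family shrinks nicely to $(x,y)$ and the differentiation theorem gives convergence at every Lebesgue point of $W$, i.e.\ almost everywhere; boundedness of $W$ on $[0,1]^2$ ensures $W\in L^1$, which is all the theorem needs. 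Your remark that the $\SS_n$ are not nested is exactly the right caveat, and correctly explains why one cannot apply the martingale convergence theorem to $\E(W\mid\sigma(\SS_n\times\SS_n))$ along the full sequence but only along refining subsequences such as $n=2^k$; though note the parenthetical example should read ``$\SS_3$ does not refine $\SS_2$'' (the coarser partition of course never refines the finer one — the relevant failure is in the other direction). The alternative route you sketch (uniform convergence for continuous $W$, then $L^1$-approximation controlled pointwise by the Hardy--Littlewood maximal function) is essentially a self-contained proof of the same differentiation statement and would also be acceptable.
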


\subsection{Many distances}

\noindent{\bf Cut norm and cut distance.} Convergence to a
$k$-digraphon can be described in more explicit forms. Let us start
with recalling the {\it cut-norm-distance} of two graphs $G$ and $G'$
on the same node set $V$ (introduced by Frieze and Kannan \cite{FK}):
\[
d_\square(G,G')= \max_{S,T\subseteq V}
\frac{|e_G(S,T)-e_{G'}(S,T)|}{|V|^2},
\]
where $e_G(S,T)$ denotes the number of edges with one endpoint in $S$
and the other in $T$. A related notion for dikernels is the {\it cut
norm}
\[
\|W\|_\square = \sup_{S,T\subseteq[0,1]}\biggl|\int\limits_{S\times
T} W(x,y)\,dx\,dy\biggr|.
\]
The cut norm defines a distance function between two dikernels in the
usual way by $d_\square(U,W)=\|U-W\|_\square$.

The cut norm has many nice properties (see \cite{BCLSV1}), of which
we need the following (Lemma 2.2 in \cite{LSz4}):

\begin{prop}\label{PROP:PROD-CONT}
Let $W_n$ $(n=1,2,\dots)$ be a sequence of uniformly bounded
dikernels such that $\|W_n\|_\square\to0$. Then for every bounded
measurable function $Z:~[0,1]^2\to\R$, we have
$\|W_nZ\|_\square\to0$.
\end{prop}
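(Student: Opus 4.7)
The plan is to reduce the claim to the case of a step function $Z$ and then use an $L^1$-approximation argument to pass to a general bounded measurable $Z$.

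First, I would observe the two simple bounds $\|W\|_\square \le \|W\|_1$ and, for any measurable $A,B\subseteq[0,1]$,
\[
\Bigl|\int_{S\times T} W(x,y)\,\one_A(x)\one_B(y)\,dx\,dy\Bigr| = \Bigl|\int_{(S\cap A)\times(T\cap B)} W\Bigr| \le \|W\|_\square.
\]
Hence $\|W\cdot(\one_A\otimes\one_B)\|_\square \le \|W\|_\square$ for every measurable rectangle $A\times B$. By the triangle inequality, if $Z'=\sum_{i=1}^N c_i\,\one_{A_i}\otimes\one_{B_i}$ is any finite linear combination of rectangle indicators, then
\[
\|W_n Z'\|_\square \;\le\; \Bigl(\sum_{i=1}^N|c_i|\Bigr)\|W_n\|_\square \;\longrightarrow\; 0 \quad(n\to\infty).
\]

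Second, I would approximate a general bounded measurable $Z$ by such a step function. Fix $\eps>0$ and let $M$ be a uniform bound with $|W_n|\le M$ and $|Z|\le M$. Since simple functions whose level sets are finite unions of measurable rectangles are dense in $L^1([0,1]^2)$, there is a finite linear combination $Z'$ of rectangle indicators with $\|Z-Z'\|_1<\eps$ and $|Z'|\le M$ (truncate if necessary). Then
\[
\|W_n(Z-Z')\|_\square \;\le\; \|W_n(Z-Z')\|_1 \;\le\; M\|Z-Z'\|_1 \;<\; M\eps,
\]
and by the triangle inequality
\[
\|W_n Z\|_\square \;\le\; \|W_n Z'\|_\square + M\eps.
\]
Letting $n\to\infty$ first (using the step-function case) and then $\eps\to0$ gives $\|W_n Z\|_\square\to0$.

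The argument has no real obstacle; the only point requiring a bit of care is the reduction to rectangle-indicator step functions, which is standard measure theory on $[0,1]^2$. Everything else is the triangle inequality and the inequalities $\|\cdot\|_\square\le\|\cdot\|_1$ and $\|WY\|_1\le\|W\|_\infty\|Y\|_1$.
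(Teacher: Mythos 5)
Your proof is correct and follows essentially the same three-step reduction as the paper: verify the claim for rectangle indicators directly from the definition of the cut norm, extend to step functions by linearity and the triangle inequality, and pass to general bounded $Z$ by $L^1$-approximation using $\|\cdot\|_\square\le\|\cdot\|_1$. You have simply filled in the quantitative details (the factor $\sum_i|c_i|$, the bound $M\|Z-Z'\|_1$) that the paper's terse one-paragraph proof leaves implicit.
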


\begin{proof}
If $Z$ is the indicator function of a rectangle, the conclusion
follows from the definition of the $\|.\|_\square$ norm. Hence the
conclusion follows for stepfunctions, since they are linear
combinations of a finite number of indicator functions of rectangles.
Then it follows for all integrable functions, since they are
approximable in $L_1([0,1]^2)$ by stepfunctions.
\end{proof}

From the point of view of graph limits, however, a kernel is only
relevant up to a measure preserving transformation of $[0,1]$. Hence
it is often more natural to consider the following distance notion,
which we call the {\it cut distance}:
\[
\delta_\square(U,W) = \inf_{\phi,\psi}\|U^\phi-W^\psi\|_\square,
\]
where $\phi,\psi:[0,1]\to[0,1]$ are measure preserving maps, and
$U^\phi(x,y)=U(\phi(x),\phi(y))$. This defines a pseudometric on the
set of kernels (it is only a pseudometric, since different kernels
may have distance $0$). An important fact is that endowing the space
of all graphons with this pseudometric makes it compact \cite{LSz3}.

For two graphs $G$ and $G'$ (not necessarily with the same number of
nodes) we define
\[
\delta_\square(G,G')=\delta_\square(W_G,W_{G'}).
\]
(There is a finite description of this in terms of the optimum of a
quadratic program, but it is quite complicated, and for us this less
explicit definition will be sufficient.) It is easy to see that if
$V(G)=V(G')$, then
\[
\delta_\square(G,G')\le d_\square(G,G').
\]

These distance notions can be extended to colored graphs and kernels.
For two fractionally $k$-colored digraphs $H$ and $H'$ on the same
node set $V$, with edgeweights $\beta_H^h(i,j)$ and
$\beta_{H'}^h(i,j)$ ($h=1,\dots,k$), let
\[
d_\square(H,H')= \frac1{|V|^2}\sum_{h=1}^k \max_{S,T\subseteq V}
\biggl|\sum_{i\in S\atop j\in T} (\beta_H^h(i,j)-
\beta_{H'}^h(i,j))\biggr|.
\]

We generalize the cut-norm-distance to two $k$-digraphons
$\Ub=(U^1,\dots,U^k)$ and $\Wb=(W^1,\dots,W^k)$:
\begin{equation}\label{EQ:D-CUT}
d_\square(\Ub,\Wb) = \sum_{h=1}^k \|U^h-W^h\|_\square.
\end{equation}
Similarly as above, we need the ``unlabeled'' cut distance
\begin{equation}\label{EQ:DELTA-CUT}
\delta_\square(\Ub,\Wb) = \inf_{\phi,\psi} d_\square(\Ub^\phi,\Wb^\psi)
\end{equation}
(where $\Ub^\psi$ is obtained from $\Ub$ by substituting $\phi(x)$
for $x$ in each of the $2k$ functions constituting $\Ub$).

For two fractionally $k$-colored digraphs $H$ and $H'$ (not
necessarily with the same number of nodes) we define
\[
\delta_\square(H,H')=\delta_\square(\Wb_H,\mathbf{W}_{H'}).
\]

The following result is proved (in a more general form) in
\cite{LSz9}.

\begin{prop}\label{PROP:CONV-COLORED-CUT}
Let $L_n$ be a sequence of $k$-colored digraphs, and let $\Wb$ be a
$k$-digraphon. Then $L_n\to\Wb$ if and only if
$\delta_\square(\Wb_{L_n},\Wb)\to0$.
\end{prop}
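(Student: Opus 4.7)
The plan is to mirror the equivalence between sampling convergence and cut convergence that is known for simple graphons from \cite{BCLSV1,LSz3}, lifting both halves to the $k$-digraphon setting. The two implications split naturally into a counting lemma and an inverse argument that combines compactness with identification.

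First I would prove the ``if'' direction via a counting lemma: for every finite $k$-colored digraph $F$ on $[r]$ and any two $k$-digraphons $\Ub,\Wb$,
\[
\bigl|\Pr(\Gbb(r,\Ub)=F)-\Pr(\Gbb(r,\Wb)=F)\bigr|\le C_{r,k}\,\delta_\square(\Ub,\Wb).
\]
After fixing measure preserving maps $\phi,\psi$ that nearly attain the infimum in \eqref{EQ:DELTA-CUT}, it suffices to bound the left side by $d_\square(\Ub^\phi,\Wb^\psi)$. Writing each sampling probability as an integral over $[0,1]^r$ of the product $\prod_{i\neq j}U^{c(i,j)}(x_i,x_j)$ (with $c(i,j)$ the prescribed color of the arc $ij$ in $F$), I would telescope the product one factor at a time, replacing a single $U^{c(i,j)}$ by $W^{c(i,j)}$ at each step. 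After integrating out all variables except $x_i,x_j$, the remaining factors condense into a bounded measurable function $Z(x_i,x_j)$, while the difference $U^{c(i,j)}-W^{c(i,j)}$ has cut norm at most $d_\square(\Ub^\phi,\Wb^\psi)$; Proposition~\ref{PROP:PROD-CONT} then controls the $L^1$-norm of the product against any test rectangle and hence the contribution of that edge. Summing over the $O(r^2)$ telescoping steps gives the inequality, so $\delta_\square(\Wb_{L_n},\Wb)\to 0$ forces $\Gbb(r,L_n)\to\Gbb(r,\Wb)$ in distribution for every $r$, i.e.\ $L_n\to\Wb$.

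For the ``only if'' direction I would combine compactness with the counting lemma just proved. Compactness of the space of $k$-digraphons modulo measure preserving transformations under $\delta_\square$ extends coordinate-wise from the graphon case and is part of the framework of \cite{LSz9}. If $L_n\to\Wb$ but $\delta_\square(\Wb_{L_n},\Wb)\not\to 0$, a subsequence stays at distance at least some $\eps>0$ from $\Wb$; passing to a further subsequence I may assume $\Wb_{L_{n_j}}$ converges in $\delta_\square$ to some $k$-digraphon $\Wb'$. The counting lemma then gives $\Gbb(r,\Wb_{L_{n_j}})\to\Gbb(r,\Wb')$ in distribution, while the hypothesis $L_n\to\Wb$ gives convergence to $\Gbb(r,\Wb)$. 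Hence $\Wb$ and $\Wb'$ induce identical sampling distributions at every order $r$; the identification theorem (a routine coordinate-wise extension of the graphon moment lemma of \cite{LSz3}) then forces $\delta_\square(\Wb,\Wb')=0$, contradicting $\delta_\square(\Wb_{L_{n_j}},\Wb')\to 0$ together with $\delta_\square(\Wb_{L_{n_j}},\Wb)\ge\eps$.

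The main obstacle is the counting lemma: isolating the right bounded auxiliary function $Z$ so that Proposition~\ref{PROP:PROD-CONT} applies one edge at a time requires setting up the telescoping separately for each ordered pair of nodes and each of the $k$ color classes. Once this bookkeeping is in place the remainder is a direct transcription of the uncolored arguments, and the compactness/identification inputs needed for the reverse direction are component-wise consequences of their graphon analogs.
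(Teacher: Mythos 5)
The paper itself gives no proof here: Proposition~\ref{PROP:CONV-COLORED-CUT} is deferred to \cite{LSz9} ``in a more general form.'' So the question is whether your two-part argument (counting lemma for the ``if'' direction, compactness plus identification for ``only if'') is sound, and the answer is that the counting lemma as you state it has a genuine gap which cannot be repaired within the framework you set up.

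The problem is in the telescoping step. After you integrate out all variables except $x_i,x_j$, the residual ``auxiliary'' function $Z(x_i,x_j)$ is not a fixed measurable function: it is built from the kernels $U^{c'}$ belonging to $\Ub=\Wb_{L_n}$ for the edges not yet replaced (in particular for the reverse arc $(j,i)$), and therefore $Z=Z_n$ depends on $n$. Proposition~\ref{PROP:PROD-CONT} requires a \emph{fixed} $Z$, and the statement ``$\|\Delta_n\|_\square\to 0$ with $\sup_n\|Z_n\|_\infty<\infty$ implies $\|\Delta_n Z_n\|_\square\to 0$'' is simply false (take $Z_n=\operatorname{sign}\Delta_n$). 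Moreover, even for a \emph{fixed} $Z$, there is no Lipschitz bound $\|\Delta Z\|_\square\le C\|\Delta\|_\square$, so the inequality with an explicit constant $C_{r,k}$ that you write down does not follow from the tools you invoke. In the undirected, uncolored setting this issue is avoided because after conditioning on the other variables the residual factor splits as $f(x_i)g(x_j)$ and one uses the $\sup_{0\le f,g\le 1}$ characterization of $\|\cdot\|_\square$. In a complete digraph that splitting always fails, since the antiparallel arc $(j,i)$ contributes a genuinely two-variable factor.

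This is not only a technical annoyance: the counting inequality you formulate is actually false under the component-wise cut distance $d_\square(\Ub,\Wb)=\sum_h\|U^h-W^h\|_\square$ from~\eqref{EQ:D-CUT}. Take $k=2$, $\Wb=(1/2,1/2)$, and let $L_n$ on $[n]$ color \emph{both} arcs of every unordered pair with the same random color, independently across pairs. Then $W^1_{L_n}$ is the graphon of a random graph $G(n,1/2)$, so $\|W^1_{L_n}-\tfrac12\|_\square\to 0$ and hence $\delta_\square(\Wb_{L_n},\Wb)\to 0$. But $\Gbb(2,L_n)$ gives both arcs the same color with probability $1$, while $\Gbb(2,\Wb)$ gives both arcs the same color with probability $1/2$. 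Thus $\Gbb(2,L_n)\not\to\Gbb(2,\Wb)$, and your bound $|\Pr(\Gbb(r,\Ub)=F)-\Pr(\Gbb(r,\Wb)=F)|\le C_{r,k}\,\delta_\square(\Ub,\Wb)$ is violated at $r=2$. The fix used in \cite{LSz9} is to treat the pair of arc colors on each unordered pair as a single decoration (a compact label on $\{i,j\}$) and to define the cut distance on the resulting kernels, which encodes the joint distribution of $(c(i,j),c(j,i))$; the component-wise cut distance on the marginal digraphons $W^1,\dots,W^k$ discards exactly the correlation that your counterexample exploits. Your ``only if'' direction, once the counting lemma and the compactness/identification machinery are set up in this coupled form, is the standard argument and would be fine; but as written, the core quantitative step is both unproved and, in this formulation, false.
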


We cannot claim convergence in the $d_\square$ distance, since
$\Wb_{L_n}$ depends on the labeling of the nodes of $L_n$, while the
convergence $L_n\to\Wb$ does not. However, at least in the case of
simple graphs, the following stronger version is true (\cite{BCLSV1},
Theorem 4.16):

\begin{prop}\label{PROP:CONV-NORM}
Let $G_n$ be a sequence of graphs, and let $U$ be a graphon such that
$G_n\to U$. Then the graphs $G_n$ can be labeled so that
$\|W_{G_n}-U\|_\square\to0$.
\end{prop}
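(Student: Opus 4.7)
The plan is to upgrade the cut-distance convergence provided by Proposition \ref{PROP:CONV-COLORED-CUT} to cut-norm convergence via a careful relabeling of each $G_n$, using an auxiliary sampled graph as an intermediate device.

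First, specializing Proposition \ref{PROP:CONV-COLORED-CUT} to simple graphs (the case $k=2$) translates the hypothesis $G_n\to U$ into $\delta_\square(W_{G_n},U)\to 0$. Unwinding \eqref{EQ:DELTA-CUT} yields for each $n$ measure-preserving maps $\phi_n,\psi_n:[0,1]\to[0,1]$ with $\|W_{G_n}^{\phi_n}-U^{\psi_n}\|_\square\to 0$. Using that measure-preserving maps are almost-everywhere approximable by bijections and that the cut norm is invariant under a joint measure-preserving change of variables, I would absorb $\psi_n$ into $\phi_n$ and reduce to $\|W_{G_n}^{\phi_n}-U\|_\square\to 0$.

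Second, write $m_n=|V(G_n)|$ and observe that a labeling of $G_n$ is precisely a permutation of the $m_n$ equal intervals of $\SS_{m_n}$. The central task is to replace the general measure-preserving bijection $\phi_n$ by such a permutation at vanishing cost in the cut norm. I would proceed indirectly by introducing an auxiliary sampled graph $H_n=\Gbb(m_n,U)$, labelled in the natural order of sampling. A standard sampling/concentration argument---a quantitative, random-sampling form of Proposition \ref{PROP:AS-APPROX} combined with Hoeffding-type tail bounds for sums of independent $[0,1]$-variables indexed by rectangles---yields $\|W_{H_n}-U\|_\square\to 0$ in probability. Combined with the first step and the triangle inequality, this gives $\delta_\square(W_{G_n},W_{H_n})\to 0$. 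Since $G_n$ and $H_n$ both have exactly $m_n$ vertices, a matching argument then produces permutations $\sigma_n$ of $[m_n]$ with $d_\square(G_n^{\sigma_n},H_n)\to 0$, and one more triangle inequality delivers $\|W_{G_n^{\sigma_n}}-U\|_\square\to 0$.

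I expect the main obstacle to be the matching step: converting cut-distance proximity of two same-sized graphs into cut-norm proximity after relabeling one of them. The subtlety is that a general measure-preserving map of $[0,1]$ is strictly more flexible than a permutation of $m_n$ intervals, so any rounding of a near-optimal measure-preserving map to a permutation introduces error that must be shown to vanish. I would address this by first approximating both $W_{G_n}$ and $W_{H_n}$ by their averages on a coarse partition of constant size (via a weak regularity lemma applied to $U$), reducing the problem to an approximate matching in a finite weighted bipartite graph handled by a Hall-type argument, and then controlling the residual fine structure by a concentration argument making essential use of Proposition \ref{PROP:PROD-CONT}.
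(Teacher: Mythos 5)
The paper does not prove this proposition; it simply quotes it from \cite{BCLSV1} (Theorem 4.16), so what one can compare against is the known proof from that reference.

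Your overall strategy matches the one used there: pass from $G_n\to U$ to $\delta_\square(W_{G_n},U)\to 0$, introduce the auxiliary sampled graph $H_n=\Gbb(m_n,U)$ (with nodes relabeled so that the sample points appear in increasing order, which is what makes $\|W_{H_n}-U\|_\square\to0$ hold with high probability), and then reduce to the statement that, for two graphs on the same vertex set, small $\delta_\square$ implies small $d_\square$ after a suitable vertex permutation. That last reduction is exactly the relationship between $\delta_\square$ and the ``overlay'' (vertex-bijection) distance $\hat\delta_\square$ established in \cite{BCLSV1}, and your plan of proving it via a weak-regularity partition followed by a matching/rounding argument is, to the best of my knowledge, the same route taken there (with an explicit but poor polynomial loss in the exponent). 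Two small remarks. First, the maneuver of absorbing $\psi_n$ into $\phi_n$ in your opening paragraph is dispensable: once you have the intermediary $H_n$, the argument only needs $\delta_\square(W_{G_n},U)\to 0$, and the triangle inequality you invoke later does the rest. Second, I don't see Proposition \ref{PROP:PROD-CONT} playing the role you assign it in the matching step: that proposition controls $\|W_nZ\|_\square$ for a fixed bounded $Z$, which is not the form of error that appears after rounding a coarse regularity-partition matching to an integral permutation; the ``residual fine structure'' there is usually controlled directly by the cut-norm guarantee of the regularity partition, not by a multiplicative continuity lemma. Since you have correctly identified the $\delta_\square\Rightarrow\hat\delta_\square$ matching result as the crux and it is a nontrivial theorem rather than a routine step, the sketch is sound in outline but that step would need the full strength of the cited machinery to close.
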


\noindent{\bf Edit distance.} From the point of view of property
testing, the ``edit distance'' is very important (in fact, from the
point of view of analysis, property testing is about the interplay
between the edit distance and the cut distance). For two graphs $G$
and $G'$ on the same set of nodes $V=V(G)=V(G')$, their {\it edit
distance} is defined by
\[
d_1(G,G') = \frac{|E(G)\triangle E(G')|}{|V|^2}.
\]
We generalize this for two $k$-colored digraphs $L$ and $L'$ on the
same node set:
\[
d_1(L,L') =\frac{D_2}{|V|^2},
\]
where $D_2$ is the number of edges colored differently, in $L$ and
$L'$.

For two kernels, their edit distance is just their $L_1$-distance as
functions. For two $k$-digraphons, their edit distance is defined by
a formula very similar to \eqref{EQ:D-CUT}:
\[
d_1(\Ub,\Wb) = \sum_{h=1}^k \|U^h-W^h\|_1.
\]
Similarly to \eqref{EQ:DELTA-CUT}, we could define the unlabeled
version of the edit distance, but we don't need it in this paper.

The following (easy) characterization of testability of graph
properties was formulated in \cite{LSz4}, Theorem 3.20.

\begin{prop}\label{PROP:TEST-CHAR}
A graph property $\PP$ is testable if and only if for any sequence
$(G_n)$ of graphs with $|V(G_n)|\to\infty$, the condition
$\delta_\square(G_n,\PP)\to 0$ implies that $d_1(G_n,\PP)\to 0$.
\end{prop}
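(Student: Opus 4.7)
The plan is to prove both implications, with the forward direction a short compactness argument and the converse an explicit construction of a test property from the convergence hypothesis. For the forward direction, assume $\PP$ is testable with witness $\TT$ and argue by contradiction: if $\delta_\square(G_n,\PP)\to 0$ while $d_1(G_n,\PP)\ge\eps$ along a subsequence, choose $H_n\in\PP$ with $\delta_\square(G_n,H_n)\to 0$ and, by compactness of the space of graphons, pass to a further subsequence along which $G_n\to W$ and $H_n\to W$ for a common limit graphon $W$. By Proposition~\ref{PROP:CONV-COLORED} applied to simple graphs, for each fixed $r$ the distributions of $\Gbb(r,G_n)$ and $\Gbb(r,H_n)$ both converge to the law of $\Gbb(r,W)$, so their total variation distance tends to $0$. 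But testability forces $\Pr[\Gbb(r,H_n)\in\TT]\ge 2/3$ (since $H_n\in\PP$) and $\Pr[\Gbb(r,G_n)\in\TT]\le 1/3$ (once $r\ge r_\eps$), which is incompatible in the limit.

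For the converse, I first extract a quantitative version of the hypothesis: for every $\eps>0$ there exist $\eta(\eps)>0$ and $N(\eps)$ such that $|V(G)|\ge N(\eps)$ and $d_1(G,\PP)\ge\eps$ together imply $\delta_\square(G,\PP)\ge\eta(\eps)$; otherwise the graphs witnessing failure for $\eta=1/n$ and $N=n$ form a sequence contradicting the hypothesis. I then fix a function $\eta'\colon\N\to(0,1)$ that tends to $0$ more slowly than the sampling error $O(1/\sqrt{\log r})$, fix a large integer $r_0$, and define
\[
\TT=\{F:\ |V(F)|<r_0\}\cup\{F:\ \delta_\square(F,\PP)\le\eta'(|V(F)|)\}.
\]
The two axioms for $\TT$ then reduce to the sampling lemma of \cite{BCLSV1}, which gives $\delta_\square(\Gbb(r,G),G)\le 2/\sqrt{\log r}$ with probability at least $2/3$. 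For $G\in\PP$ and $r\ge r_0$ this places $\Gbb(r,G)$ within $\eta'(r)$ of $\PP$, hence in $\TT$; for $r<r_0$ the sample is in $\TT$ automatically. For $G$ with $d_1(G,\PP)\ge\eps$ and $|V(G)|\ge r_\eps$ (taken at least $N(\eps)$ and large enough that $\eta(\eps)-2/\sqrt{\log r_\eps}>\eta'(r_\eps)$), the triangle inequality for $\delta_\square$ yields
\[
\delta_\square(\Gbb(r,G),\PP)\ge\delta_\square(G,\PP)-\delta_\square(\Gbb(r,G),G)\ge\eta(\eps)-2/\sqrt{\log r}>\eta'(r)
\]
with probability at least $2/3$, so $\Gbb(r,G)\notin\TT$ with the required probability.

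The main technical input is the sampling lemma $\delta_\square(\Gbb(r,G),G)\to 0$ in probability with an explicit rate, which is the nontrivial analytic ingredient behind both the compactness step in the forward direction and the quantitative verification in the converse. Once this tool and the compactness of graphon space are granted, the remaining work is careful book-keeping of constants and the extraction of a quantitative statement from the topological convergence hypothesis.
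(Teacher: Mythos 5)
The paper does not prove this proposition itself; it is quoted from \cite{LSz4}, Theorem 3.20, and called ``easy,'' so there is no in-paper proof to compare against. Your two-direction plan (compactness plus sample convergence in one direction, an explicit test property built from the sampling lemma in the other) is the standard line of argument and the converse direction is essentially complete: the quantitative extraction of $\eta(\eps)$, $N(\eps)$ from the sequential hypothesis is correct, and the test property $\TT$ you define works once one insists that $\eta'$ is monotone and satisfies $\eta'(r)\ge 2/\sqrt{\log r}$ for $r\ge r_0$, so that the inequality $\eta(\eps)-2/\sqrt{\log r}>\eta'(r)$ persists for \emph{all} $r\ge r_\eps$, not merely for $r=r_\eps$; you should make that persistence explicit.

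There is, however, a genuine gap in the forward direction. You pick $H_n\in\PP$ with $\delta_\square(G_n,H_n)\to 0$ and then invoke sample convergence of $\Gbb(r,H_n)$ to $\Gbb(r,W)$. This step requires $|V(H_n)|\to\infty$, and nothing you have said forces that: a priori the witnesses $H_n$ realising $\delta_\square(G_n,\PP)\to 0$ may all be small. If $|V(H_n)|$ stays bounded, then along a subsequence $H_n\equiv H$ for a fixed graph $H\in\PP$, and for $r>|V(H)|$ the random sample $\Gbb(r,H_n)$ is not even defined, so the contradiction you set up (comparing $\Pr[\Gbb(r,G_n)\in\TT]$ with $\Pr[\Gbb(r,H_n)\in\TT]$ at a fixed $r\ge r_\eps$) is unavailable when $r_\eps>|V(H)|$. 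To make the argument airtight you must either show that nearby witnesses can always be taken with $|V(H_n)|\to\infty$ (which is not automatic just because $\PP$ contains arbitrarily large graphs), or handle the bounded-witness case separately. This is a real hole in the forward implication, not merely a cosmetic omission, and it is the one place where your write-up should be repaired before it could stand as a proof.
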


We note that the condition says that $G_n$ is close to some graph in
$\PP$ (not necessarily with the same number of nodes) in the
$\delta_\square$ distance, while the conclusion is that it must be
close to a graph in $\PP$ on the same node set in the edit distance.

\section{Main proof}

We start with a randomized construction to obtain a $k$-colored
digraph from a fractionally $k$-colored digraph $H$: we color every
edge $ij\in \binom{[n]}{2}$ with color $h$ with probability
$\beta^h(i,j)$. For different pairs $i,j$ we make an independent
decision. We denote this random $k$-colored digraph by $\Lbb(H)$.

\begin{lemma}\label{LEM:LH-CLOSE}
Let $H$ be a fractionally $k$-colored digraph on $n$ nodes. Then
\[
d_\square(H,\Lbb(H))\leq {\frac{10k}{\sqrt n}}
\]
with probability at least $1-ke^{-n}$.
\end{lemma}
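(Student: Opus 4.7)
The plan is a standard Chernoff--plus--union-bound argument. The randomness in $\Lbb(H)$ consists of one independent draw per ordered pair of nodes $(i,j)$: the indicator $Y^h_{ij}:=\one[\text{the color of }(i,j)\text{ in }\Lbb(H)\text{ is }h]$ is Bernoulli with mean $\beta^h(i,j)$, and for every fixed color $h$ the family $(Y^h_{ij})_{i\ne j}$ is independent across pairs. Consequently, for fixed $h$ and fixed subsets $S,T\subseteq [n]$, the quantity
\[
D_{S,T,h}:=\sum_{i\in S,\,j\in T}\bigl(Y^h_{ij}-\beta^h(i,j)\bigr)
\]
is a sum of at most $n^2$ independent, mean-zero, $[-1,1]$-valued random variables, so Hoeffding's inequality gives $\Pr(|D_{S,T,h}|\ge t)\le 2e^{-2t^2/n^2}$.

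Next I would take a union bound over the $4^n$ choices of the pair $(S,T)$ and the $k$ colors, picking $t$ of order $n^{3/2}$. Concretely, setting $t=10n^{3/2}$ turns the single-event bound into $2e^{-200n}$, which absorbs the $4^n$ combinatorial factor with enormous slack: the total failure probability is at most $k\cdot 4^n\cdot 2e^{-200n}\le ke^{-n}$ for every $n\ge 1$.

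On the complementary high-probability event we have $\max_{S,T}|D_{S,T,h}|\le 10n^{3/2}$ for every $h$, and summing over the $k$ colors in the definition of $d_\square$ yields
\[
d_\square(H,\Lbb(H))=\frac{1}{n^2}\sum_{h=1}^k\max_{S,T}|D_{S,T,h}|\le \frac{10k}{\sqrt{n}},
\]
which is exactly the claim.

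There is no real obstacle: exponential concentration at scale $n^{3/2}$ trivially dominates the $4^n$ union bound, and the constant $10$ in the statement is chosen so generously that no constant optimization is needed. The only minor point to verify is that the Bernoulli indicators across different ordered pairs $(i,j)$ are genuinely jointly independent, which is built into the definition of $\Lbb(H)$ (the coloring decisions for distinct pairs are made with independent coin flips).
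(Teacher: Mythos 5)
Your proof is correct. The paper itself gives no argument: it simply cites Lemma~4.3 of~\cite{BCLSV1} for the case $k=2$ and says the general case follows color by color. What you have written is essentially the content of that cited lemma, unpacked into a self-contained Hoeffding-plus-union-bound proof, and then the ``per color'' extension you perform via the union bound over $h$ is exactly the reduction the paper alludes to. So the underlying technique is the same; the difference is purely that you supply the details the paper outsources.

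One cosmetic point to tighten: you describe the summands $Y^h_{ij}-\beta^h(i,j)$ as ``$[-1,1]$-valued'' but then quote Hoeffding in the form $\Pr(|D_{S,T,h}|\ge t)\le 2e^{-2t^2/n^2}$, which is the constant one gets when each summand has \emph{range~$1$}, not range~$2$. Fortunately that is the case here --- since $Y^h_{ij}\in\{0,1\}$ and $\beta^h(i,j)\in[0,1]$, each centered variable takes values in an interval of length exactly~$1$ --- so the stated exponent is right; it is only the phrase ``$[-1,1]$-valued'' that slightly undersells the reason. (And even the weaker constant $e^{-t^2/(2n^2)}$, i.e.\ $e^{-50n}$ at $t=10n^{3/2}$, would still crush the $4^n$ union bound, so nothing breaks.) Aside from this, the independence across ordered pairs, the choice $t=10n^{3/2}$, the $k\cdot 4^n\cdot 2e^{-200n}\le ke^{-n}$ estimate, and the final normalization by $n^2$ all check out and reproduce the stated bound $10k/\sqrt{n}$.
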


\begin{proof}
For two colors, this is just Lemma 4.3 in \cite{BCLSV1}. For general
$k$, it follows by applying this fact to each color separately.
\end{proof}

The main step in the proof of Theorem \ref{THM:MAIN} is the following
lemma.

\begin{lemma}\label{LEM:PULLBACK}
Let $\Wb=(W^1,\dots,W^k)$ be a $k$-digraphon, and suppose that
$U=\sum_{h=1}^m W^h$ is symmetric (where $1\le m\le k)$. Let $F_n$ be
a sequence of simple graphs such that $F_n\to U$. Then there exist
$k$-colored digraphs $J_n$ on $V(F_n)$ such that $J_n'=F_n$ and
$J_n\to\Wb$.
\end{lemma}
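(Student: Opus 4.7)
The plan is, first, to relabel each $F_n$ via Proposition~\ref{PROP:CONV-NORM} so that $\|W_{F_n}-U\|_\square\to 0$, and then to construct a fractional $k$-colored digraph $H_n$ on $V(F_n)=[n]$ whose shadow equals $F_n$ and whose associated $k$-digraphon $\Wb_{H_n}$ converges to $\Wb$ in cut norm. Given this, Lemma~\ref{LEM:LH-CLOSE} combined with Borel--Cantelli produces $k$-colored digraphs $J_n:=\Lbb(H_n)$ with $d_\square(H_n,J_n)\to 0$ almost surely; since the $\Lbb$ construction places color $h$ at a pair $ij$ only when $\beta^h(i,j)>0$, and $H_n$ will be arranged so that $\sum_{h\le m}\beta^h(i,j)=\one[ij\in E(F_n)]$, the output automatically satisfies $J_n'=F_n$. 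Combined with $\Wb_{H_n}\to\Wb$ this gives $J_n\to\Wb$ via Proposition~\ref{PROP:CONV-COLORED-CUT}.

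To build $H_n$, I consider the ``ideal'' $k$-dikernel $\wt\Wb_n$ on $[0,1]^2$ defined by $\wt W_n^h := (W^h/U)\,W_{F_n}$ for $h\le m$ and $\wt W_n^h := (W^h/(1-U))(1-W_{F_n})$ for $h>m$, with the convention $0/0=0$; a redistribution of mass on the nullset $\{U=0,\,W_{F_n}=1\}\cup\{U=1,\,W_{F_n}=0\}$ enforces $\sum_h\wt W_n^h=1$ everywhere, and the shadow of $\wt\Wb_n$ is then exactly $W_{F_n}$. I set $\beta^h(i,j):=\tfrac{1}{\lambda(S_i)\lambda(S_j)}\int_{S_i\times S_j}\wt W_n^h$, so that $\Wb_{H_n}^h=(\wt W_n^h)_{\SS_n}$; the identities $\sum_h\beta^h(i,j)=1$ and $\sum_{h\le m}\beta^h(i,j)=\one[ij\in E(F_n)]$ then follow from the corresponding pointwise statements for $\wt\Wb_n$.

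The core estimate is $\|\Wb_{H_n}^h-W^h\|_\square\to 0$ for each $h$. I split it as $(\wt W_n^h-W^h)_{\SS_n}+((W^h)_{\SS_n}-W^h)$ and use the elementary fact that averaging over $\SS_n\times\SS_n$ does not increase the cut norm: for any step function on this partition the sup defining $\|\cdot\|_\square$ is attained on sets $S,T$ that are unions of $S_i$'s, where averaging preserves integrals. The first piece is therefore bounded by $\|\wt W_n^h-W^h\|_\square$, and the second tends to $0$ in $L^1$ by Proposition~\ref{PROP:AS-APPROX} and bounded convergence. For $h\le m$, up to the redistribution correction one has $\wt W_n^h-W^h=(W^h/U)(W_{F_n}-U)$, and Proposition~\ref{PROP:PROD-CONT} applied with the fixed bounded function $Z:=W^h/U$ gives $\|\wt W_n^h-W^h\|_\square\to 0$; a symmetric argument using $1-U$ and $1-W_{F_n}$ handles $h>m$. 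The main obstacle is controlling the redistribution correction on the set $\{U=0\}$: since $\|W_{F_n}-U\|_\square\to 0$ and $\one_{\{U=0\}}$ is a fixed $L^\infty$ function, testing cut-norm convergence against it yields $\int_{\{U=0\}}W_{F_n}\to 0$, which bounds the $L^1$ (hence cut-norm) mass of the correction; the symmetric bound handles $\{U=1\}$.
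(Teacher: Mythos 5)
Your proof is correct and follows the same overall architecture as the paper's (relabel via Proposition~\ref{PROP:CONV-NORM}; build a fractional $k$-coloring $H_n$ on $V(F_n)$ with shadow $F_n$ and $\Wb_{H_n}\to\Wb$ in cut norm; then apply Lemma~\ref{LEM:LH-CLOSE} and Borel--Cantelli to round to a genuine coloring $J_n=\Lbb(H_n)$). However, the technical core differs in a meaningful way. The paper's $\beta^h(i,j)$ for $h\le m$ is $(A_n)_{ij}\,W^h_{\SS_n}(x,y)/U_{\SS_n}(x,y)$ --- a \emph{ratio of cell averages} --- which leads to a three-term split of $\|\Wb_{H_n}^h-W^h\|_\square$: a term on $\{U=0\}$ controlled by Proposition~\ref{PROP:PROD-CONT}, a term controlled by a.e.\ convergence $(UZ)_{\SS_n}/U_{\SS_n}\to Z$ (Proposition~\ref{PROP:AS-APPROX}) together with dominated convergence, and a term $(W_{F_n}-U)Z$ again handled by Proposition~\ref{PROP:PROD-CONT}. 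You instead take the \emph{pointwise} ratio $Z=W^h/U$ (bounded in $[0,1]$ since $W^h\le U$), form $\wt W_n^h=Z\,W_{F_n}$, and then average; this gives the cleaner exact identity $\wt W_n^h-W^h=Z\,(W_{F_n}-U)$, so Proposition~\ref{PROP:PROD-CONT} applies in one stroke, and the remaining term $(W^h)_{\SS_n}-W^h$ is dispatched by Proposition~\ref{PROP:AS-APPROX} plus the elementary fact that averaging over a product partition is a contraction for $\|\cdot\|_\square$. The price you pay is a redistribution correction on $\{U=0,W_{F_n}=1\}\cup\{U=1,W_{F_n}=0\}$ to keep $\sum_h\beta^h=1$; your bound $\int_{\{U=0\}}W_{F_n}\le\|(W_{F_n}-U)\one_{\{U=0\}}\|_\square\to0$ controls it correctly. (In fact the paper's construction has the same degenerate-cell issue where $U_{\SS_n}=0$ but $(A_n)_{ij}=1$; you handle it explicitly, whereas the paper elides it.) Net: your approach trades the paper's a.e./dominated-convergence argument for the cut-norm contraction of the averaging operator, and is arguably slightly more transparent, while the paper's ratio-of-averages avoids any redistribution because $\beta^h$ is defined cellwise to begin with. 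Both are valid.
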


\begin{proof}
First, we construct a fractionally $k$-colored digraph $H_n$ on
$V(F_n)$. To keep the notation simple, assume that $V(F_n)=[n]$. By
Proposition \ref{PROP:CONV-NORM}, we can choose the labeling of the
nodes of each $F_n$ so that $\|W_{F_n}-U\|_\square\to 0$.

For every pair $i,j\in [n]$, we define
\[
\beta^h(i,j) =
  \begin{cases}
    \displaystyle (A_n)_{ij} \frac{W^h_{\SS_n}(x,y)}{U_{\SS_n}(x,y)}
    & \text{if $1\le h\le m $}, \\
    \displaystyle (1-(A_n)_{ij})\frac{W^h_{\SS_n}(x,y)}{1-U_{\SS_n}(x,y)}
    & \text{if $m+1\le h\le k$},
  \end{cases}
\]
where $A_n$ denotes the adjacency matrix of $G_n$ and $x\in S_i$ and
$y\in S_j$ (these numbers are independent of the choice of $x$ and
$y$). It is easy to check that $\sum_h \beta^h(i,j)=1$ for all
$i\not=j$. We show that for the fractionally $k$-colored digraph
$H_n$ constructed this way, we have
\begin{equation}\label{EQ:WNW}
d_\square(\Wb_{H_n},\Wb)\to 0 \qquad(n\to\infty).
\end{equation}
Here we have
\[
d_\square(\Wb_{H_n},\Wb) = \sum_{h=0}^k \|W^h_{H_n}-W^h\|_\square,
\]
and so it suffices to prove that $ \|W^h_{H_n}-W^h\|_\square\to0$ for
every fixed $h$. We describe the proof for $h\le m $; the other case
is analogous. Since $0\le W^h\le U$, we can write $W^h=UZ$, where
$0\le Z\le 1$, and $Z=0$ if $U=0$. Then we have
\[
\|W^h_{H_n}-W^h\|_\square = \sup_{S,T\subseteq [0,1]}
\Bigl|\int\limits_{S\times T} W^h_{H_n}-W^h\Bigr|.
\]
Substituting from the definition,
\[
\int\limits_{S\times T} (W^h_{H_n}-W^h) = \int\limits_{S\times T}
\Bigl(W_{F_n}(x,y) \frac{W^h_{\SS_n}(x,y)}{U_{\SS_n}(x,y)}
-W^h(x,y)\Bigr)\,dx\,dy.
\]
We split this integral as follows:
\begin{align}\label{EQ:NORM-1}
\int\limits_{S\times T\atop U=0}
W_{F_n}\frac{(UZ)_{\SS_n}}{U_{\SS_n}}+ \int\limits_{S\times T\atop
U\not=0} W_{F_n}\Bigl(\frac{(UZ)_{\SS_n}}{U_{\SS_n}}-Z\Bigr)+
\int\limits_{S\times T} (W_{F_n}-U) Z.
\end{align}
The first term, which is nonnegative, can be estimated as follows:
\begin{align*}
\int\limits_{S\times T\atop U=0}
W_{F_n}\frac{(UZ)_{\SS_n}}{U_{\SS_n}} &\le \int\limits_{S\times T}
W_{F_n}\one_{U=0} = \int\limits_{S\times T} (W_{F_n}-U)\one_{U=0} \le
\|(W_{F_n}-U)\one_{U=0}\|_\square.
\end{align*}
Here the right hand side tends to $0$ by Proposition
\ref{PROP:PROD-CONT}. The second term can be estimated like this. By
Proposition \ref{PROP:AS-APPROX}, we have $(UZ)_{\SS_n}\to UZ$ and
$U_{\SS_n}\to U$ almost everywhere. Hence $(UZ)_{\SS_n}/U_{\SS_n}\to
Z$ in almost every point where $U\not=0$. Since the integrand is
bounded, this implies that
\[
\Bigl|\int\limits_{S\times T\atop U\not=0}
W_{F_n}\Bigl(\frac{(UZ)_{\SS_n}}{U_{\SS_n}}-Z\Bigr)\Bigr| \le
\int\limits_{U\not=0}
W_{F_n}\Bigl|\frac{(UZ)_{\SS_n}}{U_{\SS_n}}-Z\Bigr|\to 0.
\]
Finally for the third term in \eqref{EQ:NORM-1}, we have
\[
\Bigl|\int\limits_{S\times T} (W_{F_n}-U) Z\Bigr|\le
\|(W_{F_n}-U)Z\|_\square,
\]
and here the right hand side tends to $0$, again by Proposition
\ref{PROP:PROD-CONT}. This proves \eqref{EQ:WNW}.

To complete the proof of the lemma, we consider the $k$-colored
digraphs $J_n=\Lbb(H_n)$. By Lemma \ref{LEM:LH-CLOSE}, we have
\begin{equation}\label{EQ:1}
d_\square(J_n,H_n)\le \frac{10k}{\sqrt{n}}
\end{equation}
with probability at least $1-ke^{-n}$. Since $\sum_ne^{-n}$ is
convergent, the Borel--Cantelli Lemma implies that almost surely
\eqref{EQ:1} holds for all but a finite number of indices $n$.
Choosing the $J_n$ so that this occurs, we have
$d_\square(J_n,H_n)=d_\square(\Wb_{J_n},\Wb_{H_n})\to0$, and hence
$d_\square(\Wb_{J_n},\Wb)\to 0$.
\end{proof}

\begin{proof*}{Theorem \ref{THM:MAIN}}
Let $\PP$ be a nondeterministically testable property; we show that
it is testable. By Proposition \ref{PROP:TEST-CHAR} it suffices to
prove that if $(F_n)$ is a sequence of graphs such that
$d_\square(F_n,\PP)\to0$, then $d_1(F_n,\PP)\to0$.

Since $\PP$ is nondeterministically testable, there are integers
$1\le m \le k$ and a testable property $\QQ$ of $k$-colored digraphs
such that $\PP=\QQ'$. Let $G_n\in\PP$ such that
$d_\square(F_n,G_n)\to0$. Since $G_n\in\PP$, there are $k$-colored
digraphs $L_n\in\QQ$ such that $G_n=L_n'$.

We may assume that the union of colors $1,\dots,m$ contains every
edge of $G_n$ in both directions; else, we refine the coloring so
that no edge in $G$ and in its complement gets the same color in any
direction (this doubles the number of colors at most). By selecting a
subsequence, we may assume that the sequence $(L_n)$ is convergent.
Let $\Wb$ be a $k$-digraphon representing its limit, and let
$U=\sum_{h=1}^m W^h$. Then $G_n\to U$. From
$\delta_\square(G_n,F_n)\to0$ we see that $F_n\to U$.

Now we invoke Lemma \ref{LEM:PULLBACK}, and construct $k$-colored
digraphs $J_n$ such that $J_n'=F_n$ and $J_n\to \Wb$. Hence
$d_\square(J_n,\QQ)\to0$. Since $\QQ$ is testable, this implies that
$d_1(J_n,\QQ)\to0$, and so we can change the color of $o(n^2)$ edges
in $J_n$ so that the resulting $k$-colored digraph $M_n$ belongs to
$\QQ$. But then $M_n'\in\PP$, and $M_n'$ differs from $F_n$ in
$o(n^2)$ edges only, so $d_1(F_n,\PP)\le d_1(F_n,M_n')\to0$.
\end{proof*}

\section{Applications}

There are many graph properties that can be certified by a
node-coloring: $k$-colorable graphs, split graphs, etc. Many of these
properties are hereditary, and so their testability follows also by
the Alon--Shapira Theorem mentioned in the introduction. Here we
formulate some consequences for non-hereditary graph properties.

One of the first nontrivial results about property testing concerned
the maximum cut. Let us derive one version. The property of a
$2$-node-colored graph $G$ that ``at least $c|V(G)|^2$ edges connect
nodes with different colors'' is trivially testable, and hence:

\begin{corollary}\label{COR:MAXCUT}
Let $0<c<1$. The property of a graph $G$ that its maximum cut
contains at least $c|V(G)|^2$ edges is testable. Similarly for
maximum bisection.
\end{corollary}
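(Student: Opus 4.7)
\medskip\noindent\textbf{Proof proposal.} The plan is to deduce the corollary from Theorem~\ref{THM:MAIN} by exhibiting the max-cut property as nondeterministically testable, with a certificate consisting of a 2-coloring of the vertex set witnessing a cut of size at least $c|V(G)|^2$. Following the edge-color encoding recalled in the introduction, I would turn such a certificate into an $8$-coloring of the ordered pairs $(i,j)$ recording the triple (color of $i$, color of $j$, edge/non-edge status of $ij$); taking $m=4$ to be the four ``edge'' colors, the shadow of such a colored digraph is exactly $G$.

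Now let $\QQ$ be the property of $8$-colored digraphs $L$ that are consistent in the sense of the introduction (all directed edges incident to a node $v$ agree on $v$'s color, and opposite orientations of a pair agree on edge/non-edge status) and in which at least $2c|V(L)|^2$ ordered pairs carry one of the two ``bichromatic edge'' colors. Consistency was already noted to be testable in the introduction, and the bichromatic-edge density is a continuous parameter in the cut distance $\delta_\square$---it is (twice) the integral of a single coordinate of the limit $k$-digraphon---so Proposition~\ref{PROP:TEST-CHAR} yields the testability of $\QQ$. Since $\QQ'=\PP$, Theorem~\ref{THM:MAIN} gives the testability of $\PP$.

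For maximum bisection I would impose the additional requirement in $\QQ$ that the two color classes have sizes differing by at most $1$. This exact-balance condition is still compatible with Proposition~\ref{PROP:TEST-CHAR}: any $8$-colored digraph whose color-class densities approach $1/2$ in the $\delta_\square$-sense can be rebalanced exactly by recoloring $O(|V|)$ nodes, a change of only $o(|V|^2)$ in edit distance. No step presents a real obstacle here: the whole argument reduces to verifying by sampling that $\QQ$ is testable and then invoking Theorem~\ref{THM:MAIN}; if anything required care, it would be the consistency part of $\QQ$, but this has already been observed to be testable in the introduction.
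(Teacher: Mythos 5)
Your proposal takes essentially the same route as the paper: the paper simply notes that the property of $2$-node-colored graphs that ``at least $c|V(G)|^2$ edges connect differently colored nodes'' (respectively, its balanced version) is trivially testable, so that max-cut (max-bisection) is the shadow of a testable colored-graph property and Theorem~\ref{THM:MAIN} applies. You spell out the node-to-edge color encoding and the testability verification that the paper dismisses as ``trivial,'' but the certificate and the deduction from the main theorem are identical.
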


In their paper \cite{GGR}, Theorem 9.1, Goldreich, Goldwasser and Ron
prove the testability of more general properties, namely the
existence of multiway cuts with upper and lower bounds on the sizes
of partition classes as well as on edge densities between parts. The
existence of such a cut can be certified by a node-coloring, and so
this property is trivially nondeterministically testable. So their
general result (without explicit bounds on the sample size) follows
from Theorem \ref{THM:MAIN}.

Alon, Fischer, Krivelevich and Szegedy \cite{AFKS} prove that a graph
property is testable, provided it is expressible in the form $\exists
x_1\dots\exists x_a\forall y_1\dots\forall y_b
\Phi(x_1,\dots,x_a,y_1,\dots,y_b)$, where the $x_i$ and $y_j$ are
variables ranging over nodes, and $\Phi$ is a (quantifier-free)
Boolean expression involving equality and adjacency of the variables
$x_i$ and $y_j$. They also give an example showing that graph
properties defined by more general first order sentences (with more
quantifier alternations) are not necessarily testable.

To relate this result to ours, let us start with noticing that
properties expressible by universal first-order formulas $\forall
y_1\dots\forall y_b \Phi(y_1,\dots,y_b)$ are exactly those
expressible by a finite number of excluded induced subgraphs. (Such
properties are hereditary, and this shows that the Alon--Shapira
Theorem described in the Introduction is a generalization of this
special case in another direction.) In the general case, roughly
speaking, they eliminate the existential quantifiers by encoding them
into a node-coloring: the color of a node expresses to which of the
nodes $x_i$ it is connected, along with the subgraph induced by the
nodes $x_i$. Hence such a property is nondeterministically testable:
the certificate is this coloring.

Our result implies a more general testability condition in terms of
logical formulas:

\begin{corollary}\label{COR:SECOND}
Let $\PP$ be a graph property expressible by a second-order formula
of the form $\exists S_1\dots\exists S_c \exists x_1\dots\exists
x_a\forall y_1\dots\forall y_b
\Phi(S_1,\dots,S_c,x_1,\dots,x_a,y_1,\dots,y_b)$, where the $S_i$ are
variables ranging over unary or binary relations, $x_i$ and $y_j$ are
variables ranging over nodes, and $\Phi$ is a (quantifier-free)
Boolean expression involving equality, adjacency, and the relations
$S_i$ of the variables $x_i$ and $y_j$. Then $\PP$ is testable.
\end{corollary}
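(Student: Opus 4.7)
The plan is to prove that $\PP$ is nondeterministically testable and then apply Theorem \ref{THM:MAIN}. The certificate will carry two pieces of information: the relations $S_1,\dots,S_c$ witnessing the second-order existentials, and additional unary markers $T_1,\dots,T_a$ identifying the specific witness nodes $x_1,\dots,x_a$. Once these are fixed, the remaining condition will be a purely universal first-order statement in the enriched language, i.e.\ a forbidden-induced-substructure property on colored digraphs, and hence testable.

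For the encoding, the paper already observes that a finite list of unary/binary relations on $V(G)$ is equivalent, up to a testable consistency condition, to a $k$-coloring with $k$ bounded, so the $S_i$ (and the $T_i$ introduced below) go directly into the certificate. For the first-order block $\exists x_1\dots\exists x_a$ I would use the AFKS-style encoding sketched in the paragraph preceding the corollary: add unary markers $T_1,\dots,T_a$ to the certificate, intended to satisfy $T_i=\{x_i\}$ for actual witnesses. Define $\QQ$ as the property of the enriched $k$-colored digraph that (i) each $T_i$ is nonempty and (ii) for every $(x_1,\dots,x_a)\in T_1\times\cdots\times T_a$ and all $y_1,\dots,y_b\in V$, the formula $\Phi(S_1,\dots,S_c,x_1,\dots,x_a,y_1,\dots,y_b)$ holds. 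Clause (ii), after rewriting typed quantification as an implication, is a universal first-order sentence in the extended vocabulary; equivalently it forbids a finite list of induced colored substructures on at most $a+b$ nodes. Such a property is hereditary and hence testable by the colored/directed analogue of the Alon--Shapira theorem of \cite{ASh}. One then checks $\QQ'=\PP$: for $G\in\PP$ we certify using actual witnesses, and for $G\notin\PP$ every certificate with all $T_i$ nonempty provides a choice of $x_i\in T_i$ for which the universal clause fails.

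The main delicate point is clause (i): \emph{some color class is nonempty} is not locally testable in the sampling sense. I would handle this by observing that any colored digraph violating (i) is $O(1/n)$-close in edit distance to one satisfying it (change a single node's marker to land in $T_i$), so this constraint contributes negligibly to $d_1$ and is vacuous in the asymptotic regime entering Proposition \ref{PROP:TEST-CHAR}; it can therefore be folded into the testability slack. With (i) absorbed in this way, the testability of $\QQ$ reduces to the testability of the hereditary clause (ii), $\PP$ is nondeterministically testable, and Theorem \ref{THM:MAIN} converts this into ordinary testability.
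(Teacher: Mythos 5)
Your overall plan---fold the relations $S_i$ and markers for the first-order witnesses into the certificate, observe that the residual formula on the colored digraph is universal (hence hereditary, hence testable), conclude that $\PP$ is nondeterministically testable, and then invoke Theorem~\ref{THM:MAIN}---is the same route the paper sketches. The paper, however, uses the AFKS encoding for the first-order existentials: each node is colored by its adjacency pattern to the (hypothetical) witnesses $x_1,\dots,x_a$, together with the isomorphism type of the subgraph they induce, rather than singleton markers $T_i=\{x_i\}$. With that encoding the residual condition is purely universal in the $y_j$'s in the enriched vocabulary, with no separate non-emptiness side condition, and the testability of the resulting colored property is essentially (the colored-digraph analogue of) the Alon--Fischer--Krivelevich--Szegedy theorem, which the paper cites as a black box.

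The gap in your version is exactly the step you flag as delicate: the absorption of clause (i) is not justified. Changing one node's marker does make some $T_i$ non-empty, but it can destroy clause (ii), since once $v$ is placed in $T_i$ the universal clause must hold with $x_i=v$, and nothing guarantees that a colored digraph that satisfies (ii) vacuously (because $T_i=\emptyset$) has any node $v$ that would satisfy these constraints, or that $o(n^2)$ further edits could create one. What Proposition~\ref{PROP:TEST-CHAR} requires is $d_1(L_n,\QQ_1\cap\QQ_2)\to 0$, i.e.\ simultaneous small edit distance to both clauses, not separate smallness for each; intersections of testable properties are not in general testable, and here $\QQ_1$ is precisely the kind of ``a measure-zero witness exists'' condition that is invisible in $\delta_\square$ and hence cannot be controlled by cut-distance closeness to $\QQ$. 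In effect you are re-deriving the first-order part of the AFKS theorem, and this absorption step is where the substance of that theorem lies. To repair the proof, either switch to the AFKS adjacency-pattern encoding and cite the colored version of their result, or supply a genuine argument that when $\delta_\square(L_n,\QQ)\to0$ one can produce a valid witness after $o(n^2)$ edits (which cannot be read off from cut-distance closeness alone, since a single witness node contributes nothing to $\delta_\square$).
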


In \cite{LSz4}, the {\it upward closure} of a graph property $\PP$
was defined as the graph property $\PP^\uparrow$ consisting of those
graphs that have a spanning subgraph in $\PP$. Suppose that $\PP$ is
testable, then the property of $3$-edge-colored graphs that ``edges
with color $1$ form a graph with property $\PP$'' is testable, and
hence:

\begin{corollary}\label{COR:UPWARD}
The upward closure of a testable graph property is testable.
\end{corollary}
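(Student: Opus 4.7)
The plan is to exhibit $\PP^\uparrow$ as the shadow of a testable property on $k$-colored digraphs and invoke Theorem~\ref{THM:MAIN}. Take $k=3$ and $m=2$, and let $\QQ$ be the property of $3$-colored digraphs whose color-$1$ subgraph (taken as a simple undirected graph) lies in $\PP$. If $G\in\PP^\uparrow$ with spanning $H\subseteq G$ in $\PP$, color the two directed copies of each edge of $H$ with $1$, the two copies of each remaining edge of $G$ with $2$, and the copies of each non-edge of $G$ with $3$; the shadow is $G$, so $\PP^\uparrow=\QQ'$. Hence it suffices to prove that $\QQ$ is testable.

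To do that, I would verify the characterization in Proposition~\ref{PROP:TEST-CHAR}. Let $L_n$ be a sequence of $3$-colored digraphs with $\delta_\square(L_n,\QQ)\to 0$, and pick witnesses $M_n\in\QQ$ with $\delta_\square(L_n,M_n)\to 0$. Writing $G_n$ and $H_n$ for the color-$1$ subgraphs of $L_n$ and $M_n$, the definition \eqref{EQ:D-CUT}--\eqref{EQ:DELTA-CUT} of the $k$-digraphon cut distance dominates the per-color cut norm, so $\delta_\square(G_n,H_n)\le\delta_\square(L_n,M_n)\to 0$. Since $H_n\in\PP$ and $\PP$ is testable, Proposition~\ref{PROP:TEST-CHAR} applied to $\PP$ provides graphs $H'_n\in\PP$ on the same node set as $L_n$ with $d_1(G_n,H'_n)\to 0$.

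Now I would produce $M'_n\in\QQ$ edit-close to $L_n$ by a coloring swap. Recolor each pair lying in $E(H'_n)\setminus E(G_n)$ with color $1$, and each pair lying in $E(G_n)\setminus E(H'_n)$ with color $2$; keep all other colors of $L_n$ unchanged. The color-$1$ subgraph of $M'_n$ is exactly $H'_n\in\PP$, so $M'_n\in\QQ$, and the number of recolored ordered pairs is at most $2\,d_1(G_n,H'_n)\,|V(L_n)|^2=o(|V(L_n)|^2)$, so $d_1(L_n,\QQ)\le d_1(L_n,M'_n)\to 0$. Combining with Theorem~\ref{THM:MAIN} gives testability of $\PP^\uparrow$.

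The only mildly delicate point is to pin down "color-$1$ subgraph" as an honest simple graph when the colored digraph is asymmetric. I would handle this by requiring in $\QQ$ that the two directed copies of each pair carry the same color, a consistency condition that is itself testable (as remarked in the introduction for node-color consistency); enlarging the palette to enforce this does not affect the argument above, and the dominance of the $k$-digraphon cut norm over each color component remains immediate.
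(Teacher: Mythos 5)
Your proposal follows the same route as the paper: exhibit $\PP^\uparrow$ as the shadow of the $3$-colored property ``color-$1$ edges form a graph in $\PP$'' and invoke Theorem~\ref{THM:MAIN}. The paper simply asserts that this auxiliary property $\QQ$ is testable, whereas you verify it via Proposition~\ref{PROP:TEST-CHAR} (per-color domination of the cut distance, edit-closeness of $G_n$ to $\PP$, and a recoloring swap), and you correctly flag and handle the directed-vs-undirected consistency issue; this is a sound elaboration of the same argument.
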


Suppose again that $\PP$ is testable, then the property of
$4$-edge-colored graphs that ``edges with colors $1$ and $2$ form a
graph with property $\PP$, and edges with colors $2$ and $3$ are
fewer than $|V(G)|^2/100$'' is testable, and hence:

\begin{corollary}\label{COR:CLOSE}
If $\PP$ is a testable property, then the property that ``we can
change at most $|V(G)|^2/100$'' edges to get a graph with property
$\PP$'' is also testable.
\end{corollary}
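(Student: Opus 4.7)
The plan is to show that the target property $\PP^\star:=\{G:\exists\, G'\in\PP \text{ with } |E(G)\triangle E(G')|\le |V(G)|^2/100\}$ is nondeterministically testable; Theorem~\ref{THM:MAIN} then immediately makes it testable. The certificate is just a putative nearby graph $G'\in\PP$, encoded as extra colors on the complete digraph over $V(G)$, in exactly the same spirit as the certificate used for Corollary~\ref{COR:UPWARD}.

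Concretely I would take $k=4$ and $m=2$ and (after relabeling so that the shadow occupies color positions $1,2$) assign color $1$ to ordered pairs that are edges of both $G$ and $G'$, color $2$ to edges of $G$ only, color $3$ to edges of $G'$ only, and color $4$ to non-edges in both. Then colors $\{1,2\}$ recover $G$, so the shadow with $m=2$ is exactly $G$; colors $\{1,3\}$ recover $G'$; and colors $\{2,3\}$ are precisely $E(G)\triangle E(G')$. Define $\QQ$ to consist of those $4$-colored digraphs satisfying (i) the subgraph on colors $\{1,3\}$ has property $\PP$, and (ii) the density of colors $\{2,3\}$ is at most $1/100$. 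By construction $\QQ'=\PP^\star$. Condition~(ii) is testable by direct density sampling on a random induced sub-$4$-coloring. Condition~(i) is testable by running the $\PP$-tester on the color-$\{1,3\}$ subgraph of $\Gbb(r,L)$: that subgraph is distributionally identical to $\Gbb(r,G')$, so the tester's guarantees transfer. A conjunction of two testable properties is testable via independent repetition to amplify each success probability past the $2/3$ and $1/3$ thresholds, so $\QQ$ is testable, and Theorem~\ref{THM:MAIN} applied to $\QQ'=\PP^\star$ finishes.

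The main and fairly mild obstacle is verifying two standard facts: that a conjunction of oblivious testable properties is again oblivious testable, and that an oblivious tester for $\PP$ applied to a single color class of a $k$-colored digraph is oblivious in the sense of Section~1. Both are straightforward amplification/repetition arguments; the remainder of the argument is just bookkeeping on color classes that parallels Corollaries~\ref{COR:UPWARD} and~\ref{COR:SECOND}.
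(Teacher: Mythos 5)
Your encoding of the certificate and the overall plan coincide with what the paper does: define a $4$-coloring whose shadow (colors $\{1,2\}$, $m=2$) recovers $G$, require the ``certified'' graph $G'$ (colors $\{1,3\}$) to have property $\PP$, and bound the density of the symmetric difference (colors $\{2,3\}$); then invoke Theorem~\ref{THM:MAIN}. In fact your color-bookkeeping is the cleaner of the two: the paper's phrase ``colors $1$ and $2$ form a graph with property $\PP$'' only makes $\QQ'=\PP^\star$ after an implicit relabeling so that the shadow-colors form a prefix, which your scheme does explicitly.

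The one place where your justification is not actually correct as stated is the claim that testability of $\QQ$ follows because ``a conjunction of two testable properties is testable via independent repetition.'' Amplification does not give this: if $G$ is $\eps$-far from $\PP_1\cap\PP_2$ it need not be $\eps$-far from $\PP_1$ or from $\PP_2$, so both individual testers may accept with high probability and no amount of repetition rescues the soundness side. To make $\QQ$'s testability rigorous you should instead argue directly through Proposition~\ref{PROP:TEST-CHAR}. Suppose $\delta_\square(L_n,\QQ)\to 0$. Then the color-$\{1,3\}$ graph $G'_n$ satisfies $\delta_\square(G'_n,\PP)\to 0$, so by testability of $\PP$ there are $G''_n\in\PP$ with $d_1(G'_n,G''_n)\to 0$; also the color-$\{2,3\}$ density of $L_n$ is at most $1/100+o(1)$ because density is continuous in cut distance. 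Now recolor $o(n^2)$ edges of $L_n$: edges in $E(G''_n)\setminus E(G'_n)$ change from colors $\{2,4\}$ to color $1$, edges in $E(G'_n)\setminus E(G''_n)$ change from colors $\{1,3\}$ to color $4$, and finally an additional $o(n^2)$ many color-$2$ edges are recolored to $4$ (or color-$3$ edges to $1$) to push the color-$\{2,3\}$ density back below $1/100$; none of these moves disturbs the color-$\{1,3\}$ graph once it equals $G''_n$, and each only decreases the color-$\{2,3\}$ mass. This gives $N_n\in\QQ$ with $d_1(L_n,N_n)\to 0$, which is exactly what Proposition~\ref{PROP:TEST-CHAR} requires. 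The paper skips this verification too, but your write-up asserts a general closure fact whose proof sketch is genuinely wrong, so the direct argument should replace it.
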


\section{Parameter estimation}

Let $f$ be a bounded graph parameter (i.e., a function defined on
simple graphs, invariant under isomorphism). We say that $f$ is {\it
estimable}, if for every $\eps,\delta>0$ there is a positive integer
$k$ such that if $G$ is a graph with at least $k$ nodes and we select
a random $k$-set $X\subseteq V(G)$, then
\begin{equation}\label{EQ:PARAM-TEST}
\Pr(|f(G)-f(G[X])|>\eps)<\delta.
\end{equation}

We define an estimable parameter of edge-colored digraphs similarly.
If $g$ is such a parameter, then we can define
\[
g'(G)=\max\{g(L):~L'=G\}.
\]

An argument very similar to the proof of Theorem \ref{THM:MAIN} above
gives:

\begin{theorem}\label{THM:MAX}
If $g$ is an estimable parameter of $k$-colored digraphs, then $g'$
is estimable as well.
\end{theorem}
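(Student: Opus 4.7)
The plan is to mirror the architecture of the proof of Theorem \ref{THM:MAIN}, replacing the testability characterization (Proposition \ref{PROP:TEST-CHAR}) with its parameter-estimation analogue: a bounded graph parameter $f$ is estimable if and only if $|f(F_n)-f(F_n')|\to 0$ whenever $|V(F_n)|,|V(F_n')|\to\infty$ and $\delta_\square(F_n,F_n')\to 0$; equivalently, $f$ extends continuously to the compact space of graphons under $\delta_\square$. The same characterization carries over to parameters of $k$-colored digraphs. Hence it suffices to show: if $F_n,F_n'$ are graphs with $|V(F_n)|,|V(F_n')|\to\infty$ and $\delta_\square(F_n,F_n')\to 0$, then $g'(F_n)-g'(F_n')\to 0$.

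I would argue by contradiction: after possibly swapping the two sequences and passing to a subsequence, assume $g'(F_n)-g'(F_n')\ge\eps$ for some $\eps>0$. For each $n$, pick a $k$-colored digraph $L_n$ with $L_n'=F_n$ and $g(L_n)\ge g'(F_n)-1/n$. Exactly as in the proof of Theorem \ref{THM:MAIN}, I would first refine the coloring (at most doubling $k$, and lifting $g$ by merging colors so that its value is preserved) so that for every pair $i,j$ both directed edges between them carry colors in $\{1,\dots,m\}$ iff $ij\in E(F_n)$. By compactness of the digraphon space, pass to a further subsequence so that $L_n\to\Wb=(W^1,\dots,W^k)$. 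Then $U=\sum_{h=1}^m W^h$ is symmetric, $F_n=L_n'\to U$, and since $\delta_\square(F_n,F_n')\to 0$ also $F_n'\to U$.

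Next I would apply Lemma \ref{LEM:PULLBACK} to $\Wb$ and the sequence $F_n'$, producing $k$-colored digraphs $J_n$ on $V(F_n')$ with $J_n'=F_n'$ and $J_n\to\Wb$. The continuous extension of $g$ supplied by estimability then yields $g(L_n)\to g(\Wb)$ and $g(J_n)\to g(\Wb)$. Since $g'(F_n)=g(L_n)+o(1)$ by the near-maximizing choice of $L_n$, while $g'(F_n')\ge g(J_n)$ by definition of $g'$, we get $\liminf(g'(F_n')-g'(F_n))\ge 0$, contradicting $g'(F_n)-g'(F_n')\ge\eps$.

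The main obstacle I anticipate is the bookkeeping around color refinement: I must verify that an estimable parameter on $k$-colored digraphs lifts consistently to an estimable parameter on the refined (up to $2k$-) colored digraphs, so that the maximum defining $g'$ is preserved and the limit $\Wb$ lies in a space on which $g$ is known to be continuous. This is routine but slightly tedious, entirely parallel to the corresponding step in Theorem \ref{THM:MAIN}; once it is in place, the rest of the argument is essentially that proof with \emph{testable/edit distance} replaced by \emph{estimable/parameter value}.
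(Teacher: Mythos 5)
Your proposal correctly fills in the argument the paper only sketches (``An argument very similar to the proof of Theorem~\ref{THM:MAIN} above gives\dots''): you replace Proposition~\ref{PROP:TEST-CHAR} by the analogous $\delta_\square$-continuity characterization of estimable parameters, pick near-maximizers $L_n$ for $g'(F_n)$, pass to a convergent subsequence, and use Lemma~\ref{LEM:PULLBACK} to pull the limiting $k$-digraphon back onto $F_n'$, which is exactly the intended route. The color-refinement bookkeeping you flag is the same issue the paper silently absorbs into its WLOG step in the proof of Theorem~\ref{THM:MAIN}, so your treatment is faithful to the paper's level of detail.
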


We could of course replace the maximum in the definition of $g'$ by
minimum.

As an example, let us consider graphs $L$ whose nodes are $2$-colored
red and blue, and let $g(L)$ denote the number of $2$-colored edges,
divided by $|V(G)|^2$. Then $g$ is trivially estimable. The simple
graph parameter $g'$ is the maximum cut (normalized), so this is
estimable.

\section{Concluding remarks}

There are several possible analogues and extensions of our results.
One could consider certificates in the form of $t$-ary relations for
any $t$. One could then allow hypergraphs instead of the original
graphs. A limit theory for hypergraphs is available (Elek and Szegedy
\cite{ESz}, and we expect our main result to generalize to
hypergraphs; however, several of the auxiliary results we have made
use of have not been extended, and a full proof will take further
research.

A generalization in a different direction would be to consider,
instead of coloring, node and edge decorations from a compact
topological space. For example, the property of being a threshold
graph can be certified by a decoration of the nodes by numbers from
$[0,1]$. The limit theory for graphs has been extended to compact
decorations \cite{LSz9}; perhaps our main result extends too, but
this takes further research.

We should point out that our results are non-effective, they don't
provide any explicit bound on how large a sample size must be chosen
for a given error bound. In this sense, what we have given is a pure
existence proof of an algorithm. From a practical point of view, this
does not make much difference from related results based on the
Regularity Lemma, but from a theoretical point of view, it would be
interesting to determine whether Theorem \ref{THM:MAIN} can be proved
with an effective bound.

Finally, let us mention that the situation is quite different in the
case of graphs with bounded degree (for which a limit theory
analogous to the dense case is available, and property testing has
been extensively studied). Here the sampling method is to select $r$
random nodes (uniformly), and explore their neighborhoods of depth
$r$. The property of a graph $G$ that ``$G$ is the disjoint union of
two graphs on at least $|V(G)|/3$ nodes'' can be certified by
coloring the nodes in these two graphs with different colors, so this
property is nondeterministically testable. On the other hand,
sampling will not distinguish between an expander graph and the
disjoint union of two copies of it, so this property is not testable.

\bigskip

\noindent{\bf Acknowledgements.} Research was supported by the
European Research Council Grant No.~227701 and by the National
Science Foundation under agreement No. DMS-0835373. Hospitality of
the Institute for Advanced Study is gratefully acknowledged. Any
opinions and conclusions expressed in this material are those of the
authors and do not necessarily reflect the views of the NSF or of the
ERC.

\end{document}